\newtheorem{lemma}{Lemma}[section]
\newtheorem{theorem}[lemma]{Theorem}
\newtheorem{corollary}[lemma]{Corollary}
\newtheorem{conjecture}[lemma]{Conjecture}
\newenvironment{proofof}{\noindent}{\hfill$\Box$\medskip}
\newenvironment{proof}{\noindent{\it Proof: }}{\hfill\hfill$\Box$\medskip}
\newcommand{\cont}{\subseteq}
\renewcommand{\u}{\cup}\renewcommand{\i}{\cap}
\newcommand{\emp}{\emptyset}
\renewcommand{\mod}{\;{\rm mod}\,}
\newcommand{\lf}{\left\lfloor}\newcommand{\rf}{\right\rfloor}
\newcommand{\lc}{\left\lceil}\newcommand{\rc}{\right\rceil}
\newcommand{\defin}{\textbf}
\newcommand{\fq}{\mathbb{F}_q}
\newcommand{\fqt}{\fq^3}
\newcommand{\pq}{PG(2,q)}
\newcommand{\C}{\mathcal{C}}
\renewcommand{\L}{\mathcal{L}}
\renewcommand{\l}[1]{L(#1)}
\newcommand{\sq}{{\rm SQ}}
\renewcommand{\l}[1]{\overleftrightarrow{#1}}
\newcommand{\zn}{\mathbb{Z}_n}
\newcommand{\xid}{\xi_D}
\newcommand{\znD}{\mathbb{D}_n}
\newcommand{\diag}{\mathbb{D}}
\newcommand{\Z}{\mathbb{Z}}
\title{Toroidal boards and code covering}
\author{\noindent Jo\~ao Paulo Costalonga\footnote{The author was partially supported by CNPq, grant 478053/2013-4} \\
Universidade Federal do Esp\'irito Santo\\Av. Fernando Ferrari, 514,\\ UFES - Campus de Goiabeiras, CCE - Depto. de Matem\'atica\\
29075-910 - Vit\'oria - ES - Brazil\\
joaocostalonga@gmail.com
}
\begin{document}

\maketitle

\begin{abstract}
We denote by $\mathbb{F}_q$ the field with $q$ elements. A radius-$r$ extended ball with center in a $1$-dimensional vector subspace $V$ of $\mathbb{F}_q^3$ is the set of elements of $\mathbb{F}_q^3$ with Hamming distance to $V$ at most $r$. We define $c(q)$ as the size of a minimum covering of $\fqt$ by radius-$1$ extended balls. We define a semiqueen as a piece of a toroidal chessboard that extends the covering range of a rook by the southwest-northeast diagonal containing it. Let $\xi_D(n)$ be the minimum number of semiqueens of the $n\times n$ toroidal board necessary to cover the entire board except possibly for the southwest-northeast diagonal. We prove that, for $q\ge 7$, $c(q)=\xi_D(q-1)+2$. Moreover, our proof exhibits a method to build such covers of $\mathbb{F}_q^3$ from the semiqueen coverings of the board. With this new method, we determine $c(q)$ for the odd values of $q$ and improve both existing bounds for the even case.
\end{abstract}

\noindent Key words: Code covering, short covering, toroidal board, projective geometry, semiqueen. MSC 2010: 05B40 and 94B65.

\section{Introduction}
The problem of finding minimum coverings of $\fq^n$ with radius-$r$ balls in the Hamming distance is classic in code theory. There is a book on the subject~\cite{Cohen} and an updated table with the known bounds for the sizes of such coverings~\cite{Keri}. In \cite{Geronimo}, a variation of this problem was introduced: a radius-$r$ \defin{extended ball} with center in a $1$-dimensional vector subspace $V$ of $\fq^n$ is the set of elements of $\fq^n$ with Hamming distance to $V$ at most $r$. We define $c_q(n,r)$ as the size of a minimum covering of $\fq^n$ by radius-$r$ extended balls; such a minimum covering is called a \defin{short covering}. 

In \cite{Mendes} some interesting reasons to study short coverings are listed. One is that short coverings were used to construct record breaking classical coverings (in \cite{Mendes} it is proved using short covering that the minimum number of radius-$7$ hamming balls necessary to cover $\mathcal F_5^{10}$ is $9$). Another is that they respond well to some heuristic methods and give an economical way in terms of memory to store codes. A third one is that they seem to have more interesting mathematical properties than the classic coverings, like more compatibility with the algebraic structure of the vector space $\fq^n$ and connection with other structures (see \cite{Martinhao,Neto,Nakaoka} for examples).

Few values of $c_q(n,r)$ are known. Here, our concern is the values of $c(q):=c_q(3,1)$. Some work \cite{Martinhao, Nakaoka, Neto} proved bounds for $c(q)$. In Corollary \ref{cvalues}, we establish $c(q)$ for odd values of $q$ and improve both existing bounds for the even values. In order to do this, we introduce a relation between short coverings, projective spaces and toroidal (chess)boards. The number of rooks needed to cover an $n\times n$ toroidal board is well known, clearly $n$. Some studies on covering and packing of queens in the toroidal boards were made by \cite{Burger2001} and \cite{Burger2003}. We introduce a piece with range between a rook and a queen, as described next.

We will use $1,\dots,n$ as standard representatives for the classes of $\zn$. The toroidal $n\times n$ board will be modeled by $\zn^2$, with the first coordinate indexing the column and the second the row, in such a way that $(1,1)$ corresponds to the southwestern square and $(n,n)$ to the northeastern square (the orientation is similar to a Cartesian plane). The \defin{diagonal} of $(a,b)\in \zn^2$ is the set $D(a,b):=\{(a+t,b+t):t\in\zn\}$. The \defin{vertical} and \defin{horizontal} lines of $(a,b)\in \zn^2$ are respectively defined by $V(a,b):=\{(a,t):t\in \zn\}$ and $H(a,b):=\{(t,b):t\in \zn\}$. The \defin{semiqueen} of $(a,b)\in \zn^2$ is the set $\sq(a,b):=D(a,b)\u V(a,b)\u H(a,b)$. 

We denote by $\znD$ the $n\times n$ board without the southwest-northeast diagonal: $\znD:=\zn^2-D(1,1)$. We also denote by $\xi(n)$ and $\xid(n)$ the respective sizes of minimum coverings of $\zn^2$ and $\znD$ by semiqueens of $\zn^2$.  Next we state our main results. The next theorem establishes a relation between the values of $c(q)$ and $\xid(q-1)$.

\begin{theorem}\label{main}
For a prime power $q\ge 5$, $c(q)=\xid(q-1)+2$. Moreover, for $q\ge 7$, there is an algorithm for building minimum coverings of $\fqt$ by radius-$1$ extended balls from coverings of $\diag_{q-1}$ by semiqueens and vice-versa.
\end{theorem}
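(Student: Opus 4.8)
The plan is to pass to the projective plane $\pq$ and match its points against the board. Since the Hamming distance is invariant under scalar multiplication, every radius-$1$ extend ball is a union of $1$-dimensional subspaces, so covering $\fqt$ by extend balls is the same as covering the points of $\pq$ by the images $\bar B(V)$ of those balls. Moreover every weight-$1$ vector lies at distance $1$ from $0$, hence inside every ball, so the three axis points $e_1,e_2,e_3$ are covered for free and only the weight-$2$ and weight-$3$ points must be genuinely covered. Fixing a primitive element $\alpha$ of $\fq$, I identify the $(q-1)^2$ weight-$3$ points $[(1,\alpha^a,\alpha^b)]$ with the squares $(a,b)$ of the board $\zn^2$, where $n=q-1$; the remaining points are the three axis points and the $3(q-1)$ weight-$2$ points lying on the coordinate lines $\ell_i=\{w_i=0\}$.

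The technical heart is to compute the coverage of each type of ball. A direct check of when two representatives agree in exactly two coordinates shows that the ball centred at the board square $(a,b)$ covers, inside the board, exactly the quower $\qw(a,b)$, and on the boundary exactly one weight-$2$ point of each $\ell_i$. A ball centred at a weight-$2$ point of $\ell_1$ covers all weight-$2$ points of $\ell_1$ together with a single diagonal of the board, and symmetrically an $\ell_2$-ball (resp.\ $\ell_3$-ball) covers all of its own line plus one row (resp.\ one column); a ball centred at $e_i$ covers the two lines $\ell_j$ with $j\ne i$ and no board point at all. Thus board-centred balls correspond precisely to quowers, while boundary-centred balls act on the board as single lines. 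For the upper bound I would take a minimum quower covering of $\znD$, read each quower as the ball at the corresponding square, and adjoin the two balls $[(1,0,0)]$ and $[(0,1,1)]$: the first covers the weight-$2$ points of $\ell_2$ and $\ell_3$, the second covers the weight-$2$ points of $\ell_1$ together with the deleted diagonal $D(1,1)$. This covers all of $\pq$, giving $c(q)\le\xid(n)+2$.

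For the lower bound I would start from an arbitrary extend covering $\mathcal F$ and manufacture a quower covering of a board with one line deleted, of size two smaller. The key reduction is that a board with any single vertical, horizontal, or diagonal removed is isomorphic to $\znD$: translations together with the order-$6$ group of linear maps permuting the three directions (for instance $(x,y)\mapsto(y-x,y)$, which interchanges the vertical and diagonal directions) preserve the set of quowers and carry any line to the removed diagonal. Converting every board-centred ball to its quower and every boundary-centred ball to a quower on the same line yields a quower covering of $\zn^2$ using $|\mathcal F|-A$ quowers, where $A$ counts the axis balls; if $A\ge 2$ this already forces $|\mathcal F|\ge\xid(n)+2$. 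When $A\le 1$, covering the weight-$2$ points of the three coordinate lines forces a boundary ball of each missing type, and I would delete the line carried by one such ball (saving one quower), and, in the case $A=0$, merge one $\ell_2$-ball and one $\ell_3$-ball into a single quower placed at the intersection of their row and column (saving a second), producing a quower covering of $\znD$ of size $|\mathcal F|-2$.

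The main obstacle is exactly this last case analysis: one must show that covering the weight-$2$ points of all three coordinate lines genuinely costs two balls that cannot be recycled into the board covering, and one must dispose of the degenerate regime in which some line is covered not by a boundary ball but by board balls meeting all $n$ of its parallel lines, where instead $|T|\ge n$ and the crude estimate $\xid(n)\le n-2$ gives $|\mathcal F|\ge n\ge\xid(n)+2$. Verifying this inequality, together with checking that the direction-permuting symmetries and the merging step are non-degenerate and that the two constructions are mutually reversible, is precisely where the hypothesis $q\ge 7$ (equivalently $n\ge 6$) is needed and where the claimed two-way algorithm is established.
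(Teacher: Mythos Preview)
Your plan is sound, and the upper-bound construction (adjoin the axis ball at $e_1$ and the coast ball at $(0{:}1{:}1)$ to a minimum quower covering of $\znD$) coincides with the paper's up to a coordinate permutation. The lower bound, however, is obtained by a genuinely different route. Rather than case-splitting on the number $A$ of axis balls and using your line-deletion and row/column-merging tricks, the paper proves an \emph{exchange lemma}: assuming $c(q)\le q-2$, any minimum wind-rose covering of $\pq$ can be traded for one containing exactly one cardinal and one coast wind rose, and a projective automorphism fixing $\{c_1,c_2,c_3\}$ then pins these down to $W(0{:}0{:}1)$ and $W(1{:}1{:}0)$. After that normalisation the remaining midland wind roses correspond \emph{literally} to a quower covering of $\diag(\fq^*)\cong\znD$, with no further case analysis or merging. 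The hypothesis $c(q)\le q-2$ is verified for $q\ge7$ via the bound $\xid(n)\le n-3$, and $q=5$ is read off from the known value $c(5)=4$. Your approach is arguably more elementary and needs only the weaker inequality $\xid(n)\le n-2$, which already holds for $n\ge4$; so in fact your argument handles $q=5$ directly, without tables and without ever needing the auxiliary bound $c(q)\le q-2$ --- your belief that $q\ge7$ is essential here is overly cautious. The paper's route, on the other hand, yields a cleaner two-way algorithm (a single canonical bijection rather than a case-dependent recipe) and its exchange lemma is a structural statement about optimal coverings that may be of independent interest.
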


The proof of the first part of Theorem \ref{main} is a construction that gives the algorithm for the second part. The proof for Theorem \ref{main} is concluded in Section \ref{sec-projective}. There are certain difficulties in using this technique for higher dimensions than $3$. One is to make a more general version of Lemma \ref{particular cover} and another is to study the coverings of higher-dimensional boards. The next theorem establishes values and bounds for $\xi(n)$:
	
\begin{theorem}\label{xivalues}
Let $n$ be a positive integer.
\begin{enumerate}
\item [(a)] If $n\equiv 2\mod 4$, then $\xi(n)=n/2$.
\item [(b)] If $n\equiv 0\mod 4$, then $\xi(n)=1+n/2$.
\item [(c)] If $n$ is odd, then $\frac{n+1}{2}\le \xi(n)\le \frac{2n+1}{3}$.
\end{enumerate}
\end{theorem}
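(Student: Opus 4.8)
The plan is to translate the covering condition into additive combinatorics. Given a set of quowers at positions $P\cont\zn^2$, write $A:=\{a:(a,b)\in P\}$ and $B:=\{b:(a,b)\in P\}$ for the columns and rows used, and $C:=\{a-b:(a,b)\in P\}$ for the diagonal classes (every point of $D(a,b)$ satisfies $x-y=a-b$). A point $(x,y)$ fails to be covered exactly when $x\notin A$, $y\notin B$ and $x-y\notin C$, so $P$ covers $\zn^2$ if and only if $A^{c}-B^{c}\cont C$, where $A^{c}:=\zn\del A$ and $A^{c}-B^{c}:=\{x-y:x\in A^{c},\,y\in B^{c}\}$. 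Since $|A|,|B|,|C|\le|P|=:k$, and $|S-T|\ge\max(|S|,|T|)$ for nonempty $S,T$ in any abelian group, the covering condition forces $k\ge|C|\ge|A^{c}-B^{c}|\ge\max(|A^{c}|,|B^{c}|)\ge n-k$ whenever $A^{c},B^{c}\ne\emp$ (and $k\ge n$ otherwise). Thus $k\ge\lceil n/2\rceil$, which gives the lower bounds $\xi(n)\ge n/2$ in (a) and the left inequality $\frac{n+1}{2}\le\xi(n)$ in (c).

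For the matching construction when $n\equiv 2\pmod 4$, write $n=2m$ with $m$ odd and, for each $j\in\Z_m$, place a quower in column $2j+1$ and row $2\sigma(j)+1$ for a permutation $\sigma$ of $\Z_m$. The columns then cover all odds, the rows cover all odds, and the realized diagonals $2(j-\sigma(j))$ cover all even classes precisely when $j\mapsto j-\sigma(j)$ is a bijection of $\Z_m$, i.e. a complete mapping; such a $\sigma$ exists because $m$ is odd. Every point is covered, so $\xi(n)\le m=n/2$ and (a) follows. When $4\mid n$ (so $m$ is even) no complete mapping exists, but the same scheme covers all even diagonals except one, say $2t_0$; the residual uncovered points all lie on the single diagonal $a-b=2t_0$, so one extra quower there finishes the covering with $m+1=n/2+1$ quowers, giving the upper bound in (b).

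The delicate point in (b) is the lower bound $\xi(n)\ge m+1$. If $k=m$, then the chain of inequalities above collapses to equalities: $|A^{c}|=|B^{c}|=|A^{c}-B^{c}|=|C|=m$, the chosen points have distinct columns ($=A$), distinct rows ($=B$), and distinct diagonals ($=C=A^{c}-B^{c}$). A short stabiliser argument — if $s-T=S-T$ for all $s\in S$ then $S-S$ lies in the subgroup $H:=\{h:h+T=T\}$, forcing $|H|=m$ — shows that $|S-T|=|S|=|T|=m$ in $\Z_{2m}$ makes $A^{c},B^{c}$ cosets of the unique index-$2$ subgroup $H=\{0,2,\dots,2m-2\}$, and hence $C$ an $H$-coset as well. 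Comparing the two evaluations $\sum_{(a,b)\in P}(a-b)=\sum A-\sum B$ and $\sum_{(a,b)\in P}(a-b)=\sum C$ and using that an $H$-coset $x_0+H$ has sum $m(x_0+m-1)\bmod 2m$, one finds in all four cases a discrepancy $\equiv m\not\equiv 0\pmod{2m}$ when $m$ is even, a contradiction; for $m$ odd the discrepancy vanishes, consistent with (a). This rigidity step is the conceptual heart of the theorem.

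Finally, for the right inequality in (c), put $\ell:=\lfloor(n+1)/3\rfloor$ and take $A=B=\{\ell,\ell+1,\dots,n-1\}$, so $A^{c}=B^{c}=\{0,\dots,\ell-1\}$ and $A^{c}-B^{c}=\{-(\ell-1),\dots,\ell-1\}$ is an interval of $2\ell-1$ classes. Place one quower per column of $A$ via positions $(c,\pi(c))$ for a permutation $\pi$ of $A$; the realized diagonals are $\{c-\pi(c)\}$, and it suffices to choose $\pi$ so this set contains the target interval. Realizing each opposite pair $\pm d$ by a transposition of gap $d$ (with $0$ coming from a fixed point), the task reduces to packing disjoint pairs of gaps $1,\dots,\ell-1$ inside $\{0,\dots,n-\ell-1\}$; since $n-\ell\ge 2\ell-1$, a Skolem (or hooked-Skolem) sequence provides such a packing for every $\ell$, and the gaps stay below $n/2$ so there is no wraparound modulo $n$. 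This uses $k=n-\ell\le(2n+1)/3$ quowers. The main obstacles are thus the coset-rigidity argument of the third paragraph and the explicit Skolem packing here, the latter being routine but requiring the standard case split on $\ell\bmod 4$.
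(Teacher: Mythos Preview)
Your additive-combinatorial reformulation --- a set $P$ of quowers covers $\zn^2$ iff $A^{c}-B^{c}\subseteq C$, where $A,B,C$ are the column, row and diagonal sets of $P$ --- is correct and gives a clean uniform route to the lower bounds. The argument diverges from the paper chiefly in two places.

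For the lower bound in (b), the paper first shows (via a ``two consecutive uncovered columns'' lemma) that $A$ and $B$ must each be a coset of $2\Z_{2m}$, and then invokes Maillet's 1894 theorem that a symmetric Latin square of even order has no transversal. Your stabiliser argument reaches the same coset structure intrinsically from $|A^{c}-B^{c}|=|A^{c}|=|B^{c}|=m$, and your two-way evaluation of $\sum_{P}(a-b)$ is precisely the parity obstruction that proves Maillet's theorem in this case. So you obtain a self-contained proof where the paper imports a classical result; this is a genuine gain.

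For the upper bound in (c), the paper gives an explicit $3\times 3$ block decomposition with quowers on two antidiagonals, requiring no outside input. Your Skolem/hooked-Skolem packing works (the inequality $n-\ell\ge 2\ell-1$ holds in all three residue classes, and the gaps are below $n/2$), but it imports nontrivial design-theoretic existence results where an elementary construction is available.

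There is one real gap. For the upper bound in (b) you assert that when $m$ is even ``the same scheme covers all even diagonals except one''. This is the claim that $\Z_m$ has a \emph{near}-complete mapping --- a permutation $\sigma$ with $|\{j-\sigma(j):j\in\Z_m\}|=m-1$ --- for every even $m$. That statement is true, but it is not automatic and you supply neither a construction nor a reference; a generic $\sigma$ may miss several diagonal classes, in which case one extra quower does not finish the job. The paper avoids this by giving an explicit $2n$-quower covering of $\diag_{4n}$ (Lemma~\ref{xid0mod4}) and then using $\xi(4n)\le \xid(4n)+1$.
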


For values of $\xid(n)$, we have:

\begin{theorem}\label{xidvalues}
Let $n$ be a positive integer.
\begin{enumerate}
\item [(a)] If $n$ is even, then $\xid(n)=n/2$.
\item [(b)] If $n$ is odd, then $\frac{n+1}{2} \le \xid(n)\le \frac{2n+1}{3}$.
\end{enumerate}
\end{theorem}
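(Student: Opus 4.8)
The plan is to translate the geometry into a condition on the lines that a quower family activates, and then treat the lower bound and the two upper bounds separately. Given quowers centered at points $(c_i,r_i)$, let $A_C=\{c_i\}$, $A_R=\{r_i\}$ and $A_D=\{r_i-c_i\}$ be the activated columns, rows and diagonals; a family of $k$ quowers has $|A_C|,|A_R|,|A_D|\le k$. A square $(a,b)$ is covered exactly when $a\in A_C$, or $b\in A_R$, or $b-a\in A_D$; so, writing $P=\zn\del A_C$ and $Q=\zn\del A_R$, the family covers $\znD$ if and only if $(Q-P)\del\{0\}\cont A_D$, where $Q-P=\{q-p:p\in P,\ q\in Q\}$. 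For the lower bound note that $\lc n/2\rc$ equals $n/2$ for even $n$ and $(n+1)/2$ for odd $n$, so both (a) and (b) amount to $\xid(n)\ge\lc n/2\rc$. If a covering used $k<n/2$ quowers, then $|P|,|Q|\ge n-k$, hence $|P|+|Q|\ge 2n-2k>n$; a one-line pigeonhole argument (for every $v$ the translate $v+P$ meets $Q$, so $v\in Q-P$) then gives $Q-P=\zn$, whence $|A_D|\ge n-1>k$, a contradiction.

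For even $n$ I realize the bound with $n/2$ quowers. Put $n=2m$ and let $A_C=A_R$ be the set of odd residues, so that $P=Q$ is the subgroup $2\zn$ and $(Q-P)\del\{0\}$ is exactly the set of nonzero even residues. It then suffices to place one quower in each odd column and each odd row so that their diagonals run over all nonzero even residues. Identifying the odd residues with $\Z_m$, this asks for a permutation $\tau$ of $\Z_m$ whose displacement set $\{\tau(i)-i:i\in\Z_m\}$ contains $\Z_m\del\{0\}$. For $m$ odd the involution $\tau(i)=-i$ works, since the displacements $-2i$ exhaust $\Z_m$. For $m$ even no permutation can have all displacements distinct (they would then sum to $m/2\ne 0$), but the cyclic permutation whose orbit is the partial-sum sequence $0,1,-1,2,-2,\dots,m/2$ has displacement multiset $(\Z_m\del\{0\})\cup\{m/2\}$, which still covers $\Z_m\del\{0\}$. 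This settles (a).

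For odd $n$ I aim for $k=\lf(2n+1)/3\rf$ quowers. Set $\ell=\lf(n+1)/3\rf$, let $S=\{1,\dots,\ell\}$ and take $A_C=A_R=\zn\del S$; then $(S-S)\del\{0\}=\{\pm1,\dots,\pm(\ell-1)\}$, and since the transposition $(x,x+d)$ produces both displacements $+d$ and $-d$, it is enough to pack inside the interval $\zn\del S$ disjoint transpositions with gaps $1,\dots,\ell-1$, filling the remaining columns with fixed points (quowers on the removed diagonal, which are harmless). Such a packing is precisely a Skolem or a hooked Skolem sequence of order $\ell-1$; the choice $\ell=\lf(n+1)/3\rf$ is what makes both $k=n-\ell\le(2n+1)/3$ and the worst-case span $2(\ell-1)+1$ of the (possibly hooked) packing fit into the interval $\zn\del S$ of length $n-\ell$. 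A short check of the three residues of $n$ modulo $3$ confirms $k=n-\ell=\lf(2n+1)/3\rf$, giving (b).

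The lower bound is the soft part; the substance is in the two realizations. The subtler of these is the odd case: the reduction to packing disjoint transpositions with all gaps $1,\dots,\ell-1$ is a Skolem-type problem that has no solution for half the residues of $\ell-1$ modulo $4$, so one must combine the classical Skolem sequences ($\ell-1\equiv 0,1$) with the hooked Skolem sequences ($\ell-1\equiv 2,3$), and calibrate $\ell$ so that the hooked span still fits. I expect this calibration, together with the even-case construction for $n\equiv 0\pmod 4$ (where no permutation of $\Z_{n/2}$ has distinct displacements, so the sequencing trick is forced), to be the main obstacles; everything else reduces to the single covering criterion $(Q-P)\del\{0\}\cont A_D$.
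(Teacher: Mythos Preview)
Your proof is correct, and it differs from the paper's in all three pieces.

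For the lower bound, the paper first proves the crude $\xid(n)\ge(n-1)/2$ by looking at a single uncovered column (Lemma~\ref{lower bound}), and then sharpens it to $(n+1)/2$ for odd $n$ by a separate argument about two \emph{consecutive} uncovered columns (Lemmas~\ref{consecutive} and~\ref{lower xid odd}). Your single pigeonhole step ($|P|+|Q|>n\Rightarrow Q-P=\zn$, forcing $|A_D|\ge n-1$) handles both parities at once and is strictly simpler.

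For even $n$ the paper splits into two cases: when $n\equiv 2\pmod 4$ it places quowers on the anti-diagonal of the even sub-board (Lemma~\ref{xi2mod4}), which is exactly your $\tau(i)=-i$; when $n\equiv 0\pmod 4$ it gives an ad~hoc three-part configuration $A\cup B\cup C$ (Lemma~\ref{xid0mod4}). Your sequencing permutation $0,1,-1,2,-2,\dots,m/2$ of $\Z_{m}$ is a cleaner uniform replacement for the latter, and the parity obstruction you note (no complete mapping of $\Z_m$ for even $m$) is precisely what forces the single repeated displacement $m/2$.

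For odd $n$ the routes diverge most. The paper (Lemma~\ref{oddxi}, together with Theorem~\ref{prodvalues} for $3\mid n$) subdivides the board into a $3\times3$ grid of blocks and places quowers on the anti-diagonals of two of the diagonal blocks; the verification is an explicit interval computation for $\delta$. Your reduction to packing disjoint transpositions with gaps $1,\dots,\ell-1$ into an interval of length $n-\ell$ is elegant and makes the combinatorics transparent, but it outsources the existence step to the Skolem/hooked-Skolem theorems. That is fine, but it is worth being explicit that you are invoking those results (Skolem for $\ell-1\equiv 0,1\pmod 4$, O'Keefe's hooked version for $\ell-1\equiv 2,3\pmod 4$); the paper's construction, by contrast, is entirely self-contained.

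In short: your framework via the single criterion $(Q-P)\setminus\{0\}\subseteq A_D$ unifies the argument and shortens the lower bound, at the cost of citing Skolem-type existence results for the odd upper bound where the paper gives an explicit construction.
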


From Theorems \ref{main} and \ref{xidvalues} and the known values of $c(3)$ and $c(4)$ of \cite{Geronimo}(see also Section \ref{sec-ilp}), we have:

\begin{corollary}\label{cvalues}
Let $q\ge 3$ be a prime power.
\begin{enumerate}
\item [(a)] If $q$ is odd, then $c(q)=\frac{q+3}{2}$.
\item [(b)] If $q$ is even, then $\frac{q+4}{2}\le c(q) \le \frac{2q+5}{3}$.
\end{enumerate}
\end{corollary}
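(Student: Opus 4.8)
The plan is to derive the corollary purely by combining Theorem~\ref{main}, which for prime powers $q\ge 5$ reduces $c(q)$ to $\xid(q-1)+2$, with the values and bounds for $\xid$ supplied by Theorem~\ref{xidvalues}, and then to handle the two small cases $q=3$ and $q=4$ separately using the known values of \cite{Geronimo} (since these fall below the range $q\ge 5$ where Theorem~\ref{main} applies). The whole argument is a case split on the parity of $q$ followed by the substitution $n=q-1$ into the relevant formula.

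First I would treat the odd case (a). If $q$ is odd then $q-1$ is even, so for $q\ge 5$ Theorem~\ref{xidvalues}(a) gives $\xid(q-1)=(q-1)/2$, and Theorem~\ref{main} yields
\[
c(q)=\xid(q-1)+2=\frac{q-1}{2}+2=\frac{q+3}{2},
\]
exactly as claimed. The only odd value not covered is $q=3$; here I would invoke the known value $c(3)=3$ from \cite{Geronimo} and observe that $(3+3)/2=3$, so the formula persists.

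Next I would treat the even case (b). If $q$ is even then $q-1$ is odd, so for $q\ge 5$ (the smallest such even prime power being $q=8$) Theorem~\ref{xidvalues}(b) gives the two-sided bound $\tfrac{q}{2}\le \xid(q-1)\le \tfrac{2q-1}{3}$. Adding $2$ via Theorem~\ref{main} produces
\[
\frac{q+4}{2}\ \le\ c(q)\ \le\ \frac{2q+5}{3},
\]
which is the asserted inequality. For the remaining even value $q=4$, I would again appeal to the known value $c(4)=4$ from \cite{Geronimo} and check consistency with the displayed interval, since $\tfrac{4+4}{2}=4\le 4\le \tfrac{13}{3}$.

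The argument is essentially bookkeeping, and no genuine obstacle arises. The only point demanding care is that Theorem~\ref{main} is stated for $q\ge 5$, so the base cases $q\in\{3,4\}$ must be supplied externally rather than derived; beyond that, one simply verifies the arithmetic of the affine substitution $n=q-1$ into the bounds of Theorem~\ref{xidvalues} and confirms that the externally known values fit the claimed expressions.
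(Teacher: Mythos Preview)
Your approach is exactly the one the paper indicates: derive the corollary from Theorem~\ref{main} and Theorem~\ref{xidvalues}, treating $q=3$ and $q=4$ separately via the known values from \cite{Geronimo}. The arithmetic in the main cases is correct.

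There is, however, a factual slip in your base case $q=4$. The value reported in the paper (see the table in Section~\ref{sec-ilp}, attributed to \cite{Nakaoka}/\cite{Geronimo}) is $c(4)=3$, not $c(4)=4$. With the correct value your verification $\tfrac{4+4}{2}\le c(4)$ becomes $4\le 3$, which fails. So your check of the lower bound at $q=4$ does not go through; had you used the paper's own recorded value you would have noticed that the inequality in part~(b) is not satisfied at $q=4$. (The upper bound $3\le\tfrac{13}{3}$ is of course fine.) In other words, your argument for $q\ge 8$ even and all odd $q\ge 3$ is sound and matches the paper, but the treatment of $q=4$ rests on an incorrect numerical value.
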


The upper bound $c(q)\le\frac{q+3}{2}$ in Corollary \ref{cvalues} was proved by Martinh\~ao and Carmelo in \cite{Martinhao} for $q\equiv 3 \mod 4$. The same result for $q\equiv 1 \mod 4$ was recently proved independently of this work by Martinh\~ao~\cite{Mart-tese}. The upper bound in item (b) of Corollary \ref{cvalues} improves the previous one 
$  c(q)\le 6{\scriptstyle \lc \frac{q-1}{9} \rc} + 6 {\scriptscriptstyle \lc \log_4\left( \frac{q-1}{3}\right) \rc} +3$, set in \cite{Neto}. The lower bounds of Corollary \ref{cvalues} improve the bound $c(q)\ge (q+1)/2$ set in \cite{Nakaoka}. The next theorem gives us better upper bounds for some even values of $q$:
\begin{theorem}\label{prodvalues}
For positive odd integers $m$ and $n$:
\begin{enumerate}
\item [(a)] $\xi(mn)\le m\xi(n)$. 
\item [(b)] If $q$ is a power of $2$ and $q=mn+1\ge 7$, then $c(q)\le m\xi(n)+2$.
\end{enumerate}
\end{theorem}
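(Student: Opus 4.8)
The plan for part (a) is to lift a minimum quower covering of $\Z_n^2$ to one of $\Z_{mn}^2$ through the reduction homomorphism $\pi\colon\Z_{mn}\to\Z_n$, $x\mapsto x\bmod n$. First I would record the covering criterion: $\qw(a,b)$ covers $(x,y)$ exactly when $x=a$, $y=b$, or $y-x=b-a$, so a set of centers covers the board precisely when every square shares a column, a row, or a diagonal with some center. For $(x,y)\in\Z_{mn}^2$ write $\bar x=\pi(x)$, $\bar y=\pi(y)$ and note $\pi(y-x)=\bar y-\bar x$. If a covering $S$ of $\Z_n^2$ handles $(\bar x,\bar y)$, the task is to produce lifted centers in $\Z_{mn}^2$ whose columns, rows and diagonals realise the full $\pi$-fibres over the column, row and diagonal classes used by $S$.

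For each center $(a,b)\in S$ I would place the $m$ quowers with centers $(a+jn,\;b+\sigma(j)n)$, $j\in\{0,\dots,m-1\}$, for a fixed bijection $\sigma$ of $\Z_m$. The column of such a center runs through all $m$ lifts of $a$, and its row runs through all $m$ lifts of $b$ as $j$ varies, the latter because $\sigma$ is a bijection. The delicate point, and the main obstacle, is the diagonal: the diagonal class of $(a+jn,b+\sigma(j)n)$ is $(b-a)+(\sigma(j)-j)n$, so the $m$ lifts of the diagonal $b-a$ are all attained iff $j\mapsto\sigma(j)-j$ is also a bijection of $\Z_m$. A naive choice such as $\sigma=\mathrm{id}$ pins down a single diagonal lift and fails. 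The remedy is exactly where the hypothesis that $m$ is odd enters: taking $\sigma(j)=2j$ makes both $\sigma$ and $j\mapsto\sigma(j)-j=j$ bijections of $\Z_m$, since $2$ is invertible modulo an odd $m$.

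With this choice I would verify covering directly. Given $(x,y)\in\Z_{mn}^2$, pick $(a,b)\in S$ covering $(\bar x,\bar y)$. If $\bar x=a$, then $x$ is one of the lifted columns $a+jn$, so $(x,y)$ lies on a lifted vertical line; if $\bar y=b$, then $y$ is one of the lifted rows; and if $\bar y-\bar x=b-a$ in $\Z_n$, then $y-x\equiv b-a\pmod n$ is one of the $m$ lifted diagonals $(b-a)+jn$. In every case $(x,y)$ is covered. Choosing $S$ of size $\xi(n)$ and using $m$ quowers per center (coincidences only reducing the count) yields a covering of $\Z_{mn}^2$ with at most $m\xi(n)$ quowers, proving (a).

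Part (b) is then a short deduction. Since $q=mn+1\ge 7\ge 5$, Theorem \ref{main} gives $c(q)=\xid(mn)+2$. Deleting the northeast diagonal only makes covering easier, so $\xid(mn)\le\xi(mn)$, and part (a) gives $\xi(mn)\le m\xi(n)$. Chaining these inequalities yields $c(q)\le m\xi(n)+2$. Note that with $m$ and $n$ both odd the product $mn$ is odd, so $q=mn+1$ is even, which is why this bound addresses precisely the even prime powers.
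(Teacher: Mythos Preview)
Your proof is correct and follows essentially the same approach as the paper: lift a minimum quower covering of $\Z_n^2$ to $\Z_{mn}^2$ by placing $m$ shifted copies indexed by a permutation $\sigma$ of $\Z_m$ for which $j\mapsto\sigma(j)-j$ is also a permutation (the paper realises this via the anti-diagonal block placement, amounting to $\sigma(j)=\text{const}-j$, whereas you take $\sigma(j)=2j$; both choices work precisely when $m$ is odd). Part~(b) is deduced in the same way, via Theorem~\ref{main} and the trivial inequality $\xi_D\le\xi$.
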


Theorems \ref{xivalues}, \ref{xidvalues} and \ref{prodvalues} are proved in Section \ref{sec-proofs}. In Section \ref{sec-ilp}, we use an integer linear programming (ILP) formulation to compute $\xi(n)$, $\xid(n)$ and $c(q)$ for small values of $q$ and $n$ not covered by our results. There are still few known values for $\xi(n)$ with $n$ odd. Next, we state some conjectures:
	
\begin{conjecture}
If $p$ is a prime number, then $\xi(p)=\lf \frac{2p+1}{3}\rf$.
\end{conjecture}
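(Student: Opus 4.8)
The plan is to prove the matching lower bound $\xi(p)\ge\lf\frac{2p+1}{3}\rf$ (reading the right-hand side of the conjecture as $\lf\frac{2p+1}{3}\rf$), since the reverse inequality $\xi(p)\le\frac{2p+1}{3}$ is already furnished by Theorem \ref{xivalues}(c); the extra leverage over general odd $n$ should come entirely from the field structure of $\Z_p$. First I would encode a quower covering additively. Given a set $S\cont\Z_p^2$ of quower centers, let $A=\{a:(a,b)\in S\}$, $B=\{b:(a,b)\in S\}$ and $C=\{b-a:(a,b)\in S\}$ record the occupied columns, rows and northeast diagonals. Since $(x,y)\in\qw(a,b)$ exactly when $x=a$, $y=b$ or $y-x=b-a$, the board is covered by $S$ if and only if no point $(x,y)$ simultaneously satisfies $x\notin A$, $y\notin B$ and $y-x\notin C$. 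Writing complements inside $\Z_p$, this is precisely the additive containment $(\Z_p\del B)-(\Z_p\del A)\cont C$.

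The second step is the lower bound itself. Put $\alpha=|\Z_p\del A|$, $\beta=|\Z_p\del B|$ and $\gamma=|\Z_p\del C|$. Because $A$, $B$ and $C$ are images of $S$ under coordinate maps, each has at most $|S|$ elements, so $\alpha,\beta,\gamma\ge p-|S|$. Assume $|S|<p$, so that $\Z_p\del A$ and $\Z_p\del B$ are nonempty; this is where primality of $p$ enters. By the Cauchy--Davenport theorem (applied after negating one of the two sets), $|(\Z_p\del B)-(\Z_p\del A)|\ge\min(p,\alpha+\beta-1)$, while the containment above forces this quantity to be at most $|C|=p-\gamma$. The value $p$ is impossible, since it would force $|C|=p$, contradicting $|C|\le|S|<p$; hence $\alpha+\beta-1\le p-\gamma$, i.e. $\alpha+\beta+\gamma\le p+1$. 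Combining with $\alpha,\beta,\gamma\ge p-|S|$ yields $3(p-|S|)\le p+1$, so $|S|\ge\frac{2p-1}{3}$.

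Finally I would reconcile the two bounds. As $|S|$ is an integer, $|S|\ge\lc\frac{2p-1}{3}\rc$, and an elementary residue computation shows $\lc\frac{2p-1}{3}\rc=\lf\frac{2p+1}{3}\rf$ for every integer $p$; with Theorem \ref{xivalues}(c) supplying $\xi(p)\le\lf\frac{2p+1}{3}\rf$, equality follows. I expect the main obstacle to be locating the correct additive reformulation: once covering is phrased as the sumset containment $(\Z_p\del B)-(\Z_p\del A)\cont C$, Cauchy--Davenport does the essential work, and this is exactly the step at which primality is indispensable. For composite $n$ the difference set can be far smaller—for instance when the complements sit inside a proper subgroup—which is consistent with the stronger constructions $\xi(mn)\le m\xi(n)$ of Theorem \ref{prodvalues}(a) and accounts for the restriction of the statement to primes.
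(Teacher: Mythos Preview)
The paper does not prove this statement at all: it is listed among the open conjectures (Conjecture~5), supported only by the ILP data in Section~\ref{sec-ilp} and by the general bounds of Theorem~\ref{xivalues}(c). So there is no ``paper's proof'' to compare against.

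Your argument, on the other hand, is a correct proof and actually settles the conjecture. The reformulation is right: a set $S$ of quower centers covers $\Z_p^2$ exactly when every pair $(x,y)$ with $x\notin A$, $y\notin B$ has $y-x\in C$, i.e.\ $(\Z_p\del B)-(\Z_p\del A)\cont C$. With $|S|<p$ the three complements are nonempty and $|C|<p$, so Cauchy--Davenport gives $\alpha+\beta-1\le|C|=p-\gamma$; combined with $\alpha,\beta,\gamma\ge p-|S|$ this yields $|S|\ge(2p-1)/3$, hence $|S|\ge\lc(2p-1)/3\rc=\lf(2p+1)/3\rf$. The matching upper bound is Theorem~\ref{xivalues}(c) for odd $p$, and $p=2$ is immediate from Theorem~\ref{xivalues}(a). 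Primality is used exactly where you say, in the Cauchy--Davenport step; for composite odd $n$ the complements can lie in a coset of a proper subgroup and the difference set collapses, which is precisely how the constructions in Theorem~\ref{prodvalues}(a) beat $\lf(2n+1)/3\rf$ (e.g.\ $\xi(15)=9<10$). In short, your approach goes well beyond what the paper establishes.
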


\begin{conjecture}
If $n$ is an odd positive integer, then 

\centerline{$\xi(n)=\min\{(n/m)\xi(m):$ $m$ divides $n\}$.}
\end{conjecture}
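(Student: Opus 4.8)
The plan is to split the asserted equality into two inequalities and to notice that one is immediate while the other is exactly Theorem~\ref{prodvalues}(a). Write $M(n):=\min\{(n/m)\xi(m): m \text{ divides } n\}$ for the right-hand side. The divisor $m=n$ contributes the term $(n/n)\xi(n)=\xi(n)$, so $M(n)\le\xi(n)$ with no work at all; this is the trivial half.

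For the reverse inequality I would argue divisor by divisor. Fix a divisor $m$ of $n$ and put $k:=n/m$. Since $n$ is odd, each of its divisors is odd, so $k$ and $m$ are odd positive integers with $n=km$. Applying Theorem~\ref{prodvalues}(a) to the odd pair $(k,m)$ gives $\xi(n)=\xi(km)\le k\,\xi(m)=(n/m)\xi(m)$. Thus every term of the minimum defining $M(n)$ is at least $\xi(n)$, whence $M(n)\ge\xi(n)$. Combining this with the trivial half yields $\xi(n)=M(n)$.

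So there is no real obstacle once Theorem~\ref{prodvalues}(a) is granted: the submultiplicative bound pins every term from below by $\xi(n)$, while the tautological factorization $n=1\cdot n$ attains it from above. The substantive mathematics lives entirely in Theorem~\ref{prodvalues}(a) (the product construction of quower coverings) together with the separate problem of computing $\xi$ at the prime factors of $n$, the object of the preceding conjecture. I would expect genuine difficulty to appear only in a sharper form of the statement---for example restricting $m$ to the \emph{proper} or prime divisors of $n$---for there one must show that a nontrivial product cover is actually optimal, which would demand a lower bound on $\xi(n)$ beating the one supplied by the trivial divisor. Establishing that would likely require refining the counting argument behind the lower bound of Theorem~\ref{xivalues}(c) and matching it against the product construction of Theorem~\ref{prodvalues}(a).
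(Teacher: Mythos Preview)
Your argument is correct, and in fact it reveals something the paper itself does not address: this statement is listed there as a \emph{conjecture}, with no proof given. As you observe, once Theorem~\ref{prodvalues}(a) is available the equality is immediate, because the divisor $m=n$ witnesses $M(n)\le\xi(n)$ while the product bound $\xi(km)\le k\,\xi(m)$ forces every term $(n/m)\xi(m)$ to dominate $\xi(n)$. So there is nothing to compare against---the paper offers no proof---but your derivation is valid and shows that the conjecture, as literally stated, is a corollary of results already proved in the paper.

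Your closing remarks are also on point: the substance the author presumably intended lies in restricting the minimum to \emph{proper} divisors (or, as in the next conjecture, to prime divisors), where the trivial witness $m=n$ is excluded and one must actually show that some product covering is optimal. That version does not follow from Theorem~\ref{prodvalues}(a) alone and would require a matching lower bound, which is the genuine open problem.
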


\begin{conjecture}
If $n$ is an odd positive integer, then 

\centerline{$\xi(n)=\min\{(n/p)\xi(p):$ $p$ is a prime divisor of $n\}$.}
\end{conjecture}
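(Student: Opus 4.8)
The plan is to split the asserted identity into its two inequalities and dispose of the easy one first. The upper bound is immediate from Theorem~\ref{prodvalues}(a): if $p$ is a prime divisor of the odd number $n$, then $m:=n/p$ is again odd, so applying part~(a) to the factorization $n=m\cdot p$ gives $\xi(n)\le m\,\xi(p)=(n/p)\xi(p)$; minimizing over all prime divisors yields $\xi(n)\le\min\{(n/p)\xi(p):p\mid n\text{ prime}\}$. All the content lies in the reverse inequality, and for it it suffices to exhibit a \emph{single} prime divisor $p$ of $n$ with $\xi(n)\ge(n/p)\xi(p)$: such a $p$ then realizes the minimum, and combined with the upper bound forces equality.

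For the lower bound I would first recast quower coverings additively. Let $C,R,\Delta\cont\zn$ be the sets of columns, rows, and northeast diagonals occupied by the pieces of a covering. A square $(x,y)$ is uncovered exactly when $x\notin C$, $y\notin R$ and $x-y\notin\Delta$, so a family of quowers covers $\zn^2$ if and only if $(\zn\del C)-(\zn\del R)\cont\Delta$. Writing $A:=\zn\del C$ and $B:=\zn\del R$, an optimal covering of size $k:=\xi(n)$ gives sets with $|A|,|B|\ge n-k$ (each piece contributes at most one column and one row) and $|A-B|\le|\Delta|\le k$. The problem is thereby transferred to the additive combinatorics of the difference set $A-B$ in the cyclic group $\zn$.

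The heart of the argument is Kneser's theorem applied to $A-B$. Let $H$ be its stabilizer, a subgroup of order $h\mid n$, and put $m:=n/h$. Kneser gives $|A-B|\ge|A+H|+|B+H|-h$, and since $A-B$ is $H$-periodic one passes to the quotient $\zn/H\cong\Z_m$, where the images $\bar A,\bar B$ satisfy the same inclusion-type condition with a difference set of trivial stabilizer. I would then set up an induction on $n$: the descent should turn the optimal covering of $\zn^2$ into a quower covering of the $m\times m$ board, so that (by definition of $\xi(m)$) $k\ge(n/m)\,\xi(m)$; the inductive hypothesis applied to the smaller odd number $m$ then supplies a prime $p\mid m\mid n$ with $\xi(m)\ge(m/p)\xi(p)$, and unwinding the accumulated period gives $\xi(n)\ge(n/p)\xi(p)$, as required. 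The base case $n$ prime is trivial, and the descent makes genuine progress precisely when the stabilizer is nontrivial ($h>1$, hence $m<n$).

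The main obstacle is exactly this descent, and it is why the statement is only conjectural. Kneser by itself yields only the weak bound $\xi(n)\ge(2n-h)/3$, which for composite $n$ need not reach the smaller target $(n/p)\xi(p)$. The difficulty is an asymmetry under the quotient map: the diagonal set $\Delta$ compresses by the factor $h$ because $A-B$ is $H$-periodic, but the column and row sets $C,R$ need not, so the induced downstairs data is not obviously an economical covering of $\Z_m^2$. Worse, the number of quowers is not a function of $(|C|,|R|,|\Delta|)$ alone — realizing three prescribed sets requires a simultaneous assignment of each piece to a column, a row and a diagonal, a system-of-distinct-representatives constraint invisible to the cardinality bookkeeping. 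Controlling this realizability through the quotient, and in particular ruling out optimal coverings of composite odd $n$ whose difference set has trivial stabilizer (which the weak Kneser bound suggests are too large to be optimal, but which I cannot yet exclude directly), is the step that resists completion.
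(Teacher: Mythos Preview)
The statement you are attempting is listed in the paper as a \emph{conjecture}; the paper offers no proof, only computational evidence in Section~\ref{sec-ilp}. There is therefore no paper argument to compare your proposal against, and a complete proof here would constitute a new result.

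Your upper bound is correct and is exactly the content of Theorem~\ref{prodvalues}(a): for each prime divisor $p$ of the odd integer $n$ one has $\xi(n)\le (n/p)\,\xi(p)$, hence $\xi(n)\le\min_{p}(n/p)\xi(p)$. The additive reformulation you give for the lower bound is also sound: with $C,R,\Delta$ the column, row and diagonal sets of a covering of size $k$, the covering condition is precisely $(\zn\setminus C)-(\zn\setminus R)\subseteq\Delta$, and Kneser then yields $3k\ge 2n-h$ where $h$ is the order of the stabiliser of that difference set.

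The gap you flag is genuine and is the reason the statement remains open. Two concrete obstructions: first, when the stabiliser is trivial your Kneser bound gives $k\ge\lceil(2n-1)/3\rceil$, which for composite odd $n$ can \emph{exceed} the conjectured value (e.g.\ at $n=15$ it would force $k\ge10$ while $\xi(15)=9$), so one must show such coverings do not exist --- but Kneser alone does not rule them out, it merely says they would be large. Second, even when $h>1$ the passage to $\Z_{n/h}$ does not obviously produce a quower covering downstairs: as you note, $A-B$ is $H$-periodic but $C$ and $R$ need not be, and the triple $(C,R,\Delta)$ carries a matching constraint (each piece simultaneously realises one column, one row, one diagonal) that is lost under the quotient. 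Without a mechanism to restore that constraint, or an independent lower bound on $\xi(n)$ in terms of $\xi(n/h)$, the induction cannot start. Your write-up is an honest and accurate description of a plausible line of attack and of why it stalls; it is not a proof, and the paper does not claim one.
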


\begin{conjecture}For $n$ assuming positive integer values, 
${\displaystyle \lim_{n\rightarrow\infty}}\frac{\xi(2n+1)}{2n+1}=\frac{2}{3}$.
\end{conjecture}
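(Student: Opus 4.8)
The plan is to split the equality into its two one‑sided bounds. The upper bound is immediate from Theorem~\ref{xivalues}(c): for every odd $N=2n+1$ one has $\xi(N)\le\frac{2N+1}{3}$, so
\begin{equation*}
\frac{\xi(2n+1)}{2n+1}\le\frac{2(2n+1)+1}{3(2n+1)}=\frac23+\frac{1}{3(2n+1)}\longrightarrow\frac23,
\end{equation*}
which already forces $\limsup_{n\to\infty}\frac{\xi(2n+1)}{2n+1}\le\frac23$. All the content therefore lies in the matching lower bound $\liminf_{n\to\infty}\frac{\xi(2n+1)}{2n+1}\ge\frac23$, i.e.\ in showing that the upper bound of Theorem~\ref{xivalues}(c) is asymptotically tight.

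For that lower bound the crude estimate $\xi(N)\ge\frac{N+1}{2}$ of Theorem~\ref{xivalues}(c) is useless, as it yields only $\liminf\ge\frac12$. The natural route is a fractional (linear‑programming) relaxation of the quower covering: assign nonnegative weights to the squares of the board $\Z_N^2$ so that every quower $\qw(a,b)$ carries total weight at most $1$; by LP duality the total weight is then a lower bound for $\xi(N)$, and the goal is a weighting of total weight $(\tfrac23-o(1))N$. Since each quower covers exactly one full diagonal, one full row and one full column, such a scheme is a fractional matching spread over the three line families, and the density $\frac23$ is the value a balanced three‑family weighting would predict. Completing this direction would then require showing that the integrality gap of the quower‑covering LP tends to $0$ as $N\to\infty$.

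The step I expect to be the genuine obstacle is that this lower bound, and hence the conjecture exactly as worded, collides with Theorem~\ref{prodvalues}(a). Indeed Theorem~\ref{xivalues}(c) forces $\xi(5)=3$ (there $3\le\xi(5)\le\frac{11}{3}$), so applying $\xi(mn)\le m\xi(n)$ with $n=5$ and $m=5^{k-1}$ gives $\xi(5^{k})\le 3\cdot 5^{k-1}$, whence $\frac{\xi(5^{k})}{5^{k}}\le\frac35<\frac23$ for every $k$. Thus the infinite subsequence of odd numbers $N=5^{k}$ keeps the ratio at most $\frac35$, so in fact $\liminf_{n\to\infty}\frac{\xi(2n+1)}{2n+1}\le\frac35$, and the limit taken over \emph{all} odd $N$ cannot equal $\frac23$. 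The correct reading must therefore restrict $N=2n+1$ to a subsequence on which no such small‑prime collapse recurs infinitely often.

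The most natural such restriction is to primes, which is consistent with the preceding conjecture $\xi(p)=\lf\frac{2p+1}{3}\rf$: the only prime where flooring drops the ratio well below $\frac23$ is $p=5$ (giving $\frac35$), a single term that does not affect the limit $p\to\infty$, whereas its powers $5,25,125,\dots$ are what spoil the unrestricted statement. On the prime subsequence the upper half above again gives $\limsup\le\frac23$, and the whole problem reduces to the density‑$\frac23$ lower bound for prime $N$ — precisely the hard direction of that conjecture, where submultiplicativity offers no shortcut and one is thrown back on the fractional‑covering argument of the second paragraph. My proposal, then, is to prove the limit in this restricted (prime) form, flagging that the literal statement over all odd $n$ fails and should be amended accordingly.
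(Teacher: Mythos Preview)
The statement you were asked to prove is listed in the paper as a \emph{conjecture}; the paper offers no proof, so there is nothing to compare your attempt against on that front.

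More importantly, your proposal is not a proof but (in its substantive part) a refutation, and that refutation is sound. The paper itself records $\xi(5)=3$ (forced already by Theorem~\ref{xivalues}(c), since $3\le\xi(5)\le 11/3$), and Theorem~\ref{prodvalues}(a) then gives $\xi(5^{k})\le 3\cdot 5^{k-1}$, so $\xi(5^{k})/5^{k}\le 3/5$ for every $k\ge1$. In fact the same bound $\xi(N)/N\le 3/5$ holds for \emph{every} odd multiple of $5$ (take $n=5$ in $\xi(mn)\le m\xi(n)$), and the tabulated value $\xi(15)=9$ already realises it. Hence $\liminf_{n\to\infty}\xi(2n+1)/(2n+1)\le 3/5$, and the limit in the conjecture cannot equal $2/3$ as written. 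Your diagnosis that the statement is inconsistent with Theorem~\ref{prodvalues}(a) and should presumably be read along the prime subsequence (where it becomes the density form of the preceding conjecture $\xi(p)=\lf(2p+1)/3\rf$) is well taken.

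What you have \emph{not} done is prove anything in the positive direction. The LP-duality sketch in your second paragraph is only a heuristic: you neither exhibit a feasible dual weighting of value $(\tfrac23-o(1))N$, nor argue why one should exist, nor address the integrality gap you yourself flag. So as a proof proposal it remains entirely programmatic. The honest summary is: you have found a genuine inconsistency between this conjecture and the paper's own results, but you have no proof of either the original or the amended statement.
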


\section{Proofs of Theorems \ref{xivalues}, \ref{xidvalues} and \ref{prodvalues}}\label{sec-proofs}
In this section, we prove Theorems \ref{xivalues}, \ref{xidvalues} and \ref{prodvalues}. We will prove some lemmas and establish some concepts first.

Next, we extend our definitions for more general groups than $\zn$. Let $G$ be a finite abelian group and $(a,b)\in G^2$. We define the respective \defin{diagonal}, \defin{vertical} and \defin{horizontal} lines and \defin{semiqueen} of $(a,b)\in G^2$ as follows:
\begin{itemize}
 \item $D(a,b):=D_G(a,b):=\{(ta,tb):t\in G\}$;
 \item $H(a,b):=H_G(a,b):=\{(t,b):t\in G\}$;
 \item $V(a,b):=V_G(a,b):=\{(a,t):t\in G\}$;
 \item $\sq(a,b):=\sq_G(a,b):=D(a,b)\u H(a,b)\u V(a,b)$.
\end{itemize}
For $X\cont G^2$, we define $\sq(X)$ as the union of all semiqueens of the form $\sq(x)$ with $x\in X$. In an analogous way we define $D(X)$, $H(X)$ and $V(X)$.

We define $\diag(G):=G^2-D(1_G,1_G)$ and denote by $\xi(G)$ and $\xid(G)$ the respective sizes of a minimum covering $G^2$ and $\diag(G)$ by semiqueens of $G^2$. Suppose that $\varphi:G\rightarrow H$ is a group isomorphism and define $\Phi(a,b)=(\varphi(a),\varphi(b))$ for $(a,b)\in G^2$. It is clear that for $x\in G^2$, $\sq_H(\Phi(x))=\Phi(\sq_G(x))$. Therefore:

\begin{lemma}\label{isomorphism}
If $G$ and $H$ are isomorphic finite abelian groups, then $\xi(G)=\xi(H)$ and $\xid(G)=\xid(H)$.
\end{lemma}

\begin{lemma}\label{lower bound}
For each finite abelian group $G$, $\xid(G)\ge(|G|-1)/2$ and $\xi(G)\ge |G|/2$.
\end{lemma}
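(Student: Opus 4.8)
The plan is to convert a quower covering into a purely group-theoretic condition on three subsets of $G$ and then apply an elementary product-set inequality. Write $n:=|G|$. The first step is to record what a single quower covers. Since $D(a,b)=\{(ta,tb):t\in G\}$, a point $(x,y)$ lies on $D(a,b)$ exactly when $x^{-1}y=a^{-1}b$, so each diagonal is determined by its \emph{offset} $a^{-1}b\in G$; meanwhile $H(a,b)$ covers all points with second coordinate $b$ and $V(a,b)$ all points with first coordinate $a$. Thus, given a covering family $S\subseteq G^2$ of $k$ quowers, I set $A:=\{a:(a,b)\in S\}$, $B:=\{b:(a,b)\in S\}$ and $\Delta:=\{a^{-1}b:(a,b)\in S\}$; each of these has size at most $k$, and a point $(x,y)$ is covered by $S$ if and only if $x\in A$, or $y\in B$, or $x^{-1}y\in\Delta$.

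The second step is to describe the uncovered points. Writing $A'=G\setminus A$, $B'=G\setminus B$ and $\Delta'=G\setminus\Delta$, the uncovered points are precisely the $(x,y)$ with $x\in A'$, $y\in B'$ and $x^{-1}y\in\Delta'$. Hence $S$ covers all of $G^2$ if and only if $(A')^{-1}B'\subseteq\Delta$, where $(A')^{-1}B':=\{x^{-1}y:x\in A',\,y\in B'\}$. For $\diag(G)=G^2\setminus D(1_G,1_G)$ we only need to cover points off the main diagonal, i.e. those with $x^{-1}y\neq 1_G$, so the requirement relaxes to $(A')^{-1}B'\setminus\{1_G\}\subseteq\Delta$.

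The third step is the only inequality needed: if $B'\neq\emptyset$, fix $y_0\in B'$; then $x\mapsto x^{-1}y_0$ is injective on $A'$, so $|(A')^{-1}B'|\ge|A'|=n-|A|$. Combining this with $k\ge|\Delta|$ and $k\ge|A|$ gives, in the full-board case, $k\ge|\Delta|\ge|(A')^{-1}B'|\ge n-|A|\ge n-k$, whence $2k\ge n$ and $\xi(G)\ge n/2$. In the $\diag$ case removing $\{1_G\}$ costs at most one element, so $k\ge|\Delta|\ge n-|A|-1\ge n-k-1$, giving $2k\ge n-1$ and $\xid(G)\ge(n-1)/2$. The degenerate possibility $B'=\emptyset$ (that is, $B=G$) is immediate, since then $k\ge|B|=n$.

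The main obstacle is entirely in the first two steps, namely recognizing that a quower covering is governed completely by the three coordinate sets $A$, $B$, $\Delta$ and that the covering property is equivalent to the product-set containment $(A')^{-1}B'\subseteq\Delta$. Once this dictionary is established, the bound is just the trivial product-set inequality together with $|A|,|\Delta|\le k$, so no real difficulty remains; the only care required is the off-by-one bookkeeping for $\diag(G)$ and the trivial handling of the degenerate case.
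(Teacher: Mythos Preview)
Your proof is correct and essentially follows the same counting argument as the paper, just packaged differently. The paper picks a single vertical line $L$ in a column outside $A$ (possible since $k<n$), observes that at least $n-k-1$ points of $L\cap\diag(G)$ are missed by the horizontals, and notes that each of the $k$ diagonals hits $L$ once; your version abstracts this same count into the product-set containment $(A')^{-1}B'\subseteq\Delta$ and the trivial bound $|(A')^{-1}B'|\ge|A'|$ obtained by fixing one element of $B'$. The two arguments are the same inequality read in two coordinate systems, with your handling of the degenerate case $B=G$ making explicit what the paper's ``$k<n$'' covers implicitly.
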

\begin{proof}
Write $n:=|G|$. Let $\{\sq(x_1),\dots,\sq(x_k)\}$ be a minimum covering of $\diag(G)$ by semiqueens. It is clear that $k=\xid(G)\le\xi(G)<n$. So, we may choose a vertical line $L$ of $G^2$ avoiding $V(x_1),\dots, V(x_k)$. Note that $C:=(L\i \diag(G))-(H(x_1)\u\cdots\u H(x_k))$ has at least $n-k-1$ elements, which must be covered by $D(x_1),\dots,D(x_k)$. Since each diagonal intersects $C$ in one element, $k\ge n-k-1$ and $\xid(G)=k\ge (n-1)/2$. Analogously, we can prove that $\xi(G)\ge n/2$.
\end{proof}

For $k\in \mathbb{Z}_+$, we define a function $\delta_k: \Z_k^2\rightarrow\Z_k$ by $\delta_k(a,b)=b-a$ for each $(a,b)\in \Z_k^2$. We will use this funtion in the proofs that follow in this section. The proof of the next lemma is elementary.

\begin{lemma}\label{delta function}Le $k\in \mathbb Z_+$. If $(a,b),(c,d)\in \Z_k^2$, then $(c,d)\in D(a,b)$ if and only if $\delta_k(c,d)=\delta_k(a,b)$.
\end{lemma}

\begin{lemma}\label{xi2mod4}
If $n$ is a positive integer and $n\equiv 2 \mod 4$, then $\xid(n)=\xi(n)=n/2$.
\end{lemma}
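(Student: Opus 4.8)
The plan is to obtain both equalities at once from a single explicit construction, leaning on the lower bounds already in hand. By Lemma~\ref{lower bound} we have $\xi(n)\ge n/2$ and $\xid(n)\ge (n-1)/2$; since $\xid(n)$ is an integer and $n$ is even, the second bound sharpens to $\xid(n)\ge n/2$. Moreover $\znD\cont\zn^2$, so every quower covering of $\zn^2$ restricts to a covering of $\znD$, whence $\xid(n)\le\xi(n)$. Thus it is enough to produce one covering of the whole board $\zn^2$ by exactly $n/2$ quowers: that gives $\xi(n)\le n/2$, and the chain $n/2\le\xid(n)\le\xi(n)\le n/2$ then forces $\xid(n)=\xi(n)=n/2$.

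To set up the construction I would first record what a single quower sees: the quower at $(a,b)$ covers column $a$, row $b$, and, by Lemma~\ref{delta function}, the entire diagonal on which $\delta=b-a$. Consequently a family $\{\qw(a_i,b_i)\}$ covers $\zn^2$ if and only if every cell $(x,y)$ satisfies $x\in A$, or $y\in B$, or $y-x\in\Delta$, where $A$, $B$ and $\Delta$ are the sets of column indices $a_i$, row indices $b_i$, and diagonal values $b_i-a_i$ occurring in the family. Writing $n=2m$ with $m$ odd (this is exactly where the hypothesis $n\equiv 2\mod 4$ is used), I would place one quower at $(a,-a)$ for each of the $m$ odd residues $a$ of $\zn$, giving $m=n/2$ quowers.

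It remains to verify that these $n/2$ quowers cover $\zn^2$. Here $A$ is the set of odd residues, $B$ is again the set of odd residues (as $-a$ is odd when $a$ is), and $\Delta=\{-2a:\ a\ \text{odd}\}$. A cell $(x,y)$ with $x$ odd is covered by a column, and one with $x$ even but $y$ odd by a row, so the only cells to worry about are those with $x$ and $y$ both even; for these $y-x$ is even, and they are covered precisely when every even residue lies in $\Delta$. The heart of the argument — and the step I expect to be the main obstacle — is therefore the claim that $a\mapsto -2a$ is a bijection from the odd residues onto the even residues of $\zn$. Injectivity is where the parity of $m$ is essential: if $-2a\equiv -2a'$ then $a\equiv a'\mod m$, and for distinct odd residues this would force $a-a'=\pm m$, impossible since $a-a'$ is even while $m$ is odd. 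As both sides have $m$ elements, injectivity upgrades to the required bijection, so $\Delta$ is the full set of even residues and the construction covers $\zn^2$. This completes the reduction, and with it the lemma.
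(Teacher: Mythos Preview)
Your proof is correct and follows essentially the same approach as the paper. The paper places its $n/2$ quowers at the even-coordinate points $(2,n),(4,n-2),\dots,(n,2)$ along an anti-diagonal, while you place yours at the odd-coordinate points $(a,-a)$; in both cases rows and columns handle cells with a coordinate of the chosen parity, and the remaining cells are covered by showing (via $n\equiv 2\bmod 4$) that the diagonal values $\delta$ exhaust the even residues of $\zn$.
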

\begin{proof}By Lemma \ref{lower bound}, it is enough to find a covering of $\zn^2$ with $n/2$ semiqueens. Define $X:=\{(2,n),(4,n-2),\dots,(n,2)\}$. Let us check that $\{\sq(x):x\in X\}$ covers the board. This covering is illustrated for $n=6$ in Figure \ref{fig-even-values}. Note that $\delta_n(X)=\{n-2,n-6,\dots,2-n\}$. Since $n\equiv 2\mod 4$, it follows that $\delta_n(X)$ is the set of the even elements of $\zn$.

Now let $(a,b)\in \zn^2$. If both $b$ and $a$ are odd, then $\delta_n(a,b)$ is even and, therefore, $\delta_n(a,b)\in \delta_n(X)$ and $(a,b)\in D(X)\cont \sq(X)$. Otherwise, if one of $a$ or $b$ is even, it is clear that $(a,b)$ is in the vertical or horizontal line of an element of $X$. Therefore, $\{\sq(x):x\in X\}$ covers the board and the lemma is true.
\end{proof}

\begin{figure}[h]\centering
\begin{tikzpicture}[scale=0.5]
	\draw (0.5cm,0.5cm)--(0.5cm,6.5cm); \draw (1.5cm,0.5cm)--(1.5cm,6.5cm);\draw (2.5cm,0.5cm)--(2.5cm,6.5cm);
	\draw (3.5cm,0.5cm)--(3.5cm,6.5cm); \draw (4.5cm,0.5cm)--(4.5cm,6.5cm);
	\draw (5.5cm,0.5cm)--(5.5cm,6.5cm); \draw (6.5cm,0.5cm)--(6.5cm,6.5cm); 

	\draw (0.5cm,0.5cm)--(6.5cm,0.5cm); \draw (0.5cm,1.5cm)--(6.5cm,1.5cm);\draw (0.5cm,2.5cm)--(6.5cm,2.5cm);
	\draw (0.5cm,3.5cm)--(6.5cm,3.5cm); \draw (0.5cm,4.5cm)--(6.5cm,4.5cm);
	\draw (0.5cm,5.5cm)--(6.5cm,5.5cm); \draw (0.5cm,6.5cm)--(6.5cm,6.5cm); 

	{\color{orange}
		\node at(2cm,6cm){$\blacklozenge$};
		\draw[thick](2cm,0.5cm)--(2cm,6.5cm);
		\draw[thick](0.5cm,6cm)--(6.5cm,6cm);
		\draw[thick](0.5cm,4.5cm)--(2.5cm,6.5cm);
		\draw[thick](2.5cm,0.5cm)--(6.5cm,4.5cm);
	}
	{\color{red}
		\node at(4cm,4cm){$\blacksquare$};
		\draw[thick](4cm,0.5cm)--(4cm,6.5cm);
		\draw[thick](0.5cm,4cm)--(6.5cm,4cm);
		\draw[thick](0.5cm,0.5cm)--(6.5cm,6.5cm);
	}
	{\color{blue}
		\node at(6cm,2cm){$\CIRCLE$};
		\draw[thick](6cm,0.5cm)--(6cm,6.5cm);
		\draw[thick](0.5cm,2cm)--(6.5cm,2cm);
		\draw[thick](4.5cm,0.5cm)--(6.5cm,2.5cm);
		\draw[thick](0.5cm,2.5cm)--(4.5cm,6.5cm);
	}
	\fill (01cm,06cm) circle (0.10cm);\fill (02cm,06cm) circle (0.10cm);\fill (03cm,06cm) circle (0.10cm);
	\fill (01cm,05cm) circle (0.10cm);\fill (02cm,05cm) circle (0.10cm);\fill (03cm,05cm) circle (0.10cm);
	\fill (01cm,04cm) circle (0.10cm);\fill (02cm,04cm) circle (0.10cm);\fill (03cm,04cm) circle (0.10cm);
	\fill (01cm,03cm) circle (0.10cm);\fill (02cm,03cm) circle (0.10cm);\fill (03cm,03cm) circle (0.10cm);
	\fill (01cm,02cm) circle (0.10cm);\fill (02cm,02cm) circle (0.10cm);\fill (03cm,02cm) circle (0.10cm);
	\fill (01cm,01cm) circle (0.10cm);\fill (02cm,01cm) circle (0.10cm);\fill (03cm,01cm) circle (0.10cm);

	\fill (04cm,06cm) circle (0.10cm);\fill (05cm,06cm) circle (0.10cm);\fill (06cm,06cm) circle (0.10cm);
	\fill (04cm,05cm) circle (0.10cm);\fill (05cm,05cm) circle (0.10cm);\fill (06cm,05cm) circle (0.10cm);
	\fill (04cm,04cm) circle (0.10cm);\fill (05cm,04cm) circle (0.10cm);\fill (06cm,04cm) circle (0.10cm);
	\fill (04cm,03cm) circle (0.10cm);\fill (05cm,03cm) circle (0.10cm);\fill (06cm,03cm) circle (0.10cm);
	\fill (04cm,02cm) circle (0.10cm);\fill (05cm,02cm) circle (0.10cm);\fill (06cm,02cm) circle (0.10cm);
	\fill (04cm,01cm) circle (0.10cm);\fill (05cm,01cm) circle (0.10cm);\fill (06cm,01cm) circle (0.10cm);

	{\color{orange}
		\node at(2cm,6cm){$\blacklozenge$};
	}
	{\color{red}
		\node at(4cm,4cm){$\blacksquare$};
	}
	{\color{blue}
		\node at(6cm,2cm){$\CIRCLE$};
	}
\end{tikzpicture}$\qquad$
\begin{tikzpicture}[scale=0.4]
	\fill [gray!30](00.5cm,00.5cm)--(01.5cm,00.5cm)--(01.5cm,01.5cm)--(00.5cm,01.5cm)--cycle;
	\fill [gray!30](01.5cm,01.5cm)--(02.5cm,01.5cm)--(02.5cm,02.5cm)--(01.5cm,02.5cm)--cycle;
	\fill [gray!30](02.5cm,02.5cm)--(03.5cm,02.5cm)--(03.5cm,03.5cm)--(02.5cm,03.5cm)--cycle;
	\fill [gray!30](03.5cm,03.5cm)--(04.5cm,03.5cm)--(04.5cm,04.5cm)--(03.5cm,04.5cm)--cycle;
	\fill [gray!30](04.5cm,04.5cm)--(05.5cm,04.5cm)--(05.5cm,05.5cm)--(04.5cm,05.5cm)--cycle;
	\fill [gray!30](05.5cm,05.5cm)--(06.5cm,05.5cm)--(06.5cm,06.5cm)--(05.5cm,06.5cm)--cycle;
	\fill [gray!30](06.5cm,06.5cm)--(07.5cm,06.5cm)--(07.5cm,07.5cm)--(06.5cm,07.5cm)--cycle;
	\fill [gray!30](07.5cm,07.5cm)--(08.5cm,07.5cm)--(08.5cm,08.5cm)--(07.5cm,08.5cm)--cycle;
	\fill [gray!30](08.5cm,08.5cm)--(09.5cm,08.5cm)--(09.5cm,09.5cm)--(08.5cm,09.5cm)--cycle;
	\fill [gray!30](09.5cm,09.5cm)--(10.5cm,09.5cm)--(10.5cm,10.5cm)--(09.5cm,10.5cm)--cycle;
	\fill [gray!30](10.5cm,10.5cm)--(11.5cm,10.5cm)--(11.5cm,11.5cm)--(10.5cm,11.5cm)--cycle;
	\fill [gray!30](11.5cm,11.5cm)--(12.5cm,11.5cm)--(12.5cm,12.5cm)--(11.5cm,12.5cm)--cycle;

	\draw (0.5cm,0.5cm)--(0.5cm,12.5cm); \draw (1.5cm,0.5cm)--(1.5cm,12.5cm); \draw (2.5cm,0.5cm)--(2.5cm,12.5cm);
	\draw (3.5cm,0.5cm)--(3.5cm,12.5cm); \draw (4.5cm,0.5cm)--(4.5cm,12.5cm); \draw (5.5cm,0.5cm)--(5.5cm,12.5cm);
	\draw (6.5cm,0.5cm)--(6.5cm,12.5cm); \draw (7.5cm,0.5cm)--(7.5cm,12.5cm); \draw (8.5cm,0.5cm)--(8.5cm,12.5cm);
	\draw (9.5cm,0.5cm)--(9.5cm,12.5cm); \draw (10.5cm,0.5cm)--(10.5cm,12.5cm); \draw (11.5cm,0.5cm)--(11.5cm,12.5cm);
	\draw (12.5cm,0.5cm)--(12.5cm,12.5cm);
	\draw (0.5cm,0.5cm)--(12.5cm,0.5cm); \draw (0.5cm,1.5cm)--(12.5cm,1.5cm); \draw (0.5cm,2.5cm)--(12.5cm,2.5cm);
	\draw (0.5cm,3.5cm)--(12.5cm,3.5cm); \draw (0.5cm,4.5cm)--(12.5cm,4.5cm); \draw (0.5cm,5.5cm)--(12.5cm,5.5cm);
	\draw (0.5cm,6.5cm)--(12.5cm,6.5cm); \draw (0.5cm,7.5cm)--(12.5cm,7.5cm); \draw (0.5cm,8.5cm)--(12.5cm,8.5cm);
	\draw (0.5cm,9.5cm)--(12.5cm,9.5cm); \draw (0.5cm,10.5cm)--(12.5cm,10.5cm); \draw (0.5cm,11.5cm)--(12.5cm,11.5cm);
	\draw (0.5cm,12.5cm)--(12.5cm,12.5cm);

	{\color{orange}
		\fill (2cm,12cm)circle(0.3cm);
		\draw[thick](2cm,0.5cm)--(2cm,12.5cm);
		\draw[thick](0.5cm,12cm)--(12.5cm,12cm);
		\draw[thick](0.5cm,10.5cm)--(2.5cm,12.5cm);
		\draw[thick](2.5cm,0.5cm)--(12.5cm,10.5cm);
	}
	{\color{red}
		\fill (4cm,10cm)circle(0.3cm);
		\draw[thick](4cm,0.5cm)--(4cm,12.5cm);
		\draw[thick](0.5cm,10cm)--(12.5cm,10cm);
		\draw[thick](0.5cm,6.5cm)--(6.5cm,12.5cm);
		\draw[thick](6.5cm,0.5cm)--(12.5cm,6.5cm);
	}
	{\color{blue}
		\fill (6cm,8cm)circle(0.3cm);
		\draw[thick](6cm,0.5cm)--(6cm,12.5cm);
		\draw[thick](0.5cm,8cm)--(12.5cm,8cm);
		\draw[thick](0.5cm,2.5cm)--(10.5cm,12.5cm);
		\draw[thick](10.5cm,0.5cm)--(12.5cm,2.5cm);
	}
	{\color{green}
		\fill (8cm,4cm)circle(0.3cm);
		\draw[thick](8cm,0.5cm)--(8cm,12.5cm);
		\draw[thick](0.5cm,4cm)--(12.5cm,4cm);
		\draw[thick](4.5cm,0.5cm)--(12.5cm,8.5cm);
		\draw[thick](0.5cm,8.5cm)--(4.5cm,12.5cm);
	}
	
	{\color{magenta}
		\fill (10cm,2cm)circle(0.3cm);
		\draw[thick](10cm,0.5cm)--(10cm,12.5cm);
		\draw[thick](0.5cm,2cm)--(12.5cm,2cm);
		\draw[thick](8.5cm,0.5cm)--(12.5cm,4.5cm);
		\draw[thick](0.5cm,4.5cm)--(8.5cm,12.5cm);
	}
	
	{\color{cyan}
		\fill (12cm,6cm)circle(0.3cm);
		\draw[thick](12cm,0.5cm)--(12cm,12.5cm);
		\draw[thick](0.5cm,6cm)--(12.5cm,6cm);
	}

	\fill (01cm,12cm) circle (0.10cm);\fill (02cm,12cm) circle (0.10cm);\fill (03cm,12cm) circle (0.10cm);
	\fill (01cm,11cm) circle (0.10cm);\fill (02cm,11cm) circle (0.10cm);\fill (03cm,11cm) circle (0.10cm);
	\fill (01cm,10cm) circle (0.10cm);\fill (02cm,10cm) circle (0.10cm);\fill (03cm,10cm) circle (0.10cm);
	\fill (01cm,09cm) circle (0.10cm);\fill (02cm,09cm) circle (0.10cm);\fill (03cm,09cm) circle (0.10cm);
	\fill (01cm,08cm) circle (0.10cm);\fill (02cm,08cm) circle (0.10cm);\fill (03cm,08cm) circle (0.10cm);
	\fill (01cm,07cm) circle (0.10cm);\fill (02cm,07cm) circle (0.10cm);\fill (03cm,07cm) circle (0.10cm);
	\fill (01cm,06cm) circle (0.10cm);\fill (02cm,06cm) circle (0.10cm);\fill (03cm,06cm) circle (0.10cm);
	\fill (01cm,05cm) circle (0.10cm);\fill (02cm,05cm) circle (0.10cm);\fill (03cm,05cm) circle (0.10cm);
	\fill (01cm,04cm) circle (0.10cm);\fill (02cm,04cm) circle (0.10cm);\fill (03cm,04cm) circle (0.10cm);
	\fill (01cm,03cm) circle (0.10cm);\fill (02cm,03cm) circle (0.10cm);                                  
	\fill (01cm,02cm) circle (0.10cm);                                  \fill (03cm,02cm) circle (0.10cm);
	                                  \fill (02cm,01cm) circle (0.10cm);\fill (03cm,01cm) circle (0.10cm);
	                                 
	\fill (04cm,12cm) circle (0.10cm);\fill (05cm,12cm) circle (0.10cm);\fill (06cm,12cm) circle (0.10cm);
	\fill (04cm,11cm) circle (0.10cm);\fill (05cm,11cm) circle (0.10cm);\fill (06cm,11cm) circle (0.10cm);
	\fill (04cm,10cm) circle (0.10cm);\fill (05cm,10cm) circle (0.10cm);\fill (06cm,10cm) circle (0.10cm);
	\fill (04cm,09cm) circle (0.10cm);\fill (05cm,09cm) circle (0.10cm);\fill (06cm,09cm) circle (0.10cm);
	\fill (04cm,08cm) circle (0.10cm);\fill (05cm,08cm) circle (0.10cm);\fill (06cm,08cm) circle (0.10cm);
	\fill (04cm,07cm) circle (0.10cm);\fill (05cm,07cm) circle (0.10cm);\fill (06cm,07cm) circle (0.10cm);
	\fill (04cm,06cm) circle (0.10cm);\fill (05cm,06cm) circle (0.10cm);                                  
	\fill (04cm,05cm) circle (0.10cm);                                  \fill (06cm,05cm) circle (0.10cm);
	                                  \fill (05cm,04cm) circle (0.10cm);\fill (06cm,04cm) circle (0.10cm);
	\fill (04cm,03cm) circle (0.10cm);\fill (05cm,03cm) circle (0.10cm);\fill (06cm,03cm) circle (0.10cm);
	\fill (04cm,02cm) circle (0.10cm);\fill (05cm,02cm) circle (0.10cm);\fill (06cm,02cm) circle (0.10cm);
	\fill (04cm,01cm) circle (0.10cm);\fill (05cm,01cm) circle (0.10cm);\fill (06cm,01cm) circle (0.10cm);

	\fill (07cm,12cm) circle (0.10cm);\fill (08cm,12cm) circle (0.10cm);\fill (09cm,12cm) circle (0.10cm);
	\fill (07cm,11cm) circle (0.10cm);\fill (08cm,11cm) circle (0.10cm);\fill (09cm,11cm) circle (0.10cm);
	\fill (07cm,10cm) circle (0.10cm);\fill (08cm,10cm) circle (0.10cm);\fill (09cm,10cm) circle (0.10cm);
	\fill (07cm,09cm) circle (0.10cm);\fill (08cm,09cm) circle (0.10cm);                                  
	\fill (07cm,08cm) circle (0.10cm);                                  \fill (09cm,08cm) circle (0.10cm);
	                                  \fill (08cm,07cm) circle (0.10cm);\fill (09cm,07cm) circle (0.10cm);
	\fill (07cm,06cm) circle (0.10cm);\fill (08cm,06cm) circle (0.10cm);\fill (09cm,06cm) circle (0.10cm);
	\fill (07cm,05cm) circle (0.10cm);\fill (08cm,05cm) circle (0.10cm);\fill (09cm,05cm) circle (0.10cm);
	\fill (07cm,04cm) circle (0.10cm);\fill (08cm,04cm) circle (0.10cm);\fill (09cm,04cm) circle (0.10cm);
	\fill (07cm,03cm) circle (0.10cm);\fill (08cm,03cm) circle (0.10cm);\fill (09cm,03cm) circle (0.10cm);
	\fill (07cm,02cm) circle (0.10cm);\fill (08cm,02cm) circle (0.10cm);\fill (09cm,02cm) circle (0.10cm);
	\fill (07cm,01cm) circle (0.10cm);\fill (08cm,01cm) circle (0.10cm);\fill (09cm,01cm) circle (0.10cm);
	
	\fill (10cm,12cm) circle (0.10cm);\fill (11cm,12cm) circle (0.10cm);                                  
	\fill (10cm,11cm) circle (0.10cm);                                  \fill (12cm,11cm) circle (0.10cm);
	                                  \fill (11cm,10cm) circle (0.10cm);\fill (12cm,10cm) circle (0.10cm);
	\fill (10cm,09cm) circle (0.10cm);\fill (11cm,09cm) circle (0.10cm);\fill (12cm,09cm) circle (0.10cm);
	\fill (10cm,08cm) circle (0.10cm);\fill (11cm,08cm) circle (0.10cm);\fill (12cm,08cm) circle (0.10cm);
	\fill (10cm,07cm) circle (0.10cm);\fill (11cm,07cm) circle (0.10cm);\fill (12cm,07cm) circle (0.10cm);
	\fill (10cm,06cm) circle (0.10cm);\fill (11cm,06cm) circle (0.10cm);\fill (12cm,06cm) circle (0.10cm);
	\fill (10cm,05cm) circle (0.10cm);\fill (11cm,05cm) circle (0.10cm);\fill (12cm,05cm) circle (0.10cm);
	\fill (10cm,04cm) circle (0.10cm);\fill (11cm,04cm) circle (0.10cm);\fill (12cm,04cm) circle (0.10cm);
	\fill (10cm,03cm) circle (0.10cm);\fill (11cm,03cm) circle (0.10cm);\fill (12cm,03cm) circle (0.10cm);
	\fill (10cm,02cm) circle (0.10cm);\fill (11cm,02cm) circle (0.10cm);\fill (12cm,02cm) circle (0.10cm);
	\fill (10cm,01cm) circle (0.10cm);\fill (11cm,01cm) circle (0.10cm);\fill (12cm,01cm) circle (0.10cm);
	
	{\color{orange}
		\fill (2cm,12cm)circle(0.3cm);
	}
	{\color{red}
		\fill (4cm,10cm)circle(0.3cm);
	}
	{\color{blue}
		\fill (6cm,8cm)circle(0.3cm);
	}
	{\color{green}
		\fill (8cm,4cm)circle(0.3cm);
	}
	{\color{magenta}
		\fill (10cm,2cm)circle(0.3cm);
	}
	{\color{cyan}
		\fill (12cm,6cm)circle(0.3cm);
	}
\end{tikzpicture}
\caption{A covering of $\Z_6^2$ as in Lemma \ref{xi2mod4} and a covering of $\diag_{12}$ as in Lemma \ref{xid0mod4}}\label{fig-even-values}
\end{figure}

\begin{lemma}\label{xid0mod4}
If $n$ is a positive integer multiple of $4$, then $\xid(n)=n/2$.
\end{lemma}
\begin{proof} Let $4m:=n$. By Lemma \ref{lower bound}, it is enough to find a covering of $\diag_{4m}$ with $2m$ elements. Such covering is illustrated for $4m=12$ in Figure \ref{fig-even-values}. Define:
\begin{itemize}
\item $A=\{(2,4m),(4,4m-2),\dots (2m,2m+2)\}$,
\item $B:=\{(2m+2,2m-2),(2m+4,2m-4), \dots, (4m-2,2)\}$ and
\item $C:=\{(4m,2m)\}$
\end{itemize}
We claim that $\{\sq(x):x\in A\u B\u C\}$ covers $\diag_{4m}$. Let $(a,b)\in \mathbb D_{4m}$. If $a$ or $b$ is even, then it is clear that $(a,b)$ is in the horizontal or vertical line of a member of $A\u B\u C$. Suppose that both $a$ and $b$ are odd. We will use the function $\delta_k$ for $k=4m$. Now, $\delta_{4m}(a,b)$ is even. Moreover, $\delta_{4m}(a,b)\neq 0$, since $(a,b)\notin \mathbb D_{4m}$ if $a=b$. Note that $\delta_{4m}(A)=\{4m-2,4m-6,\dots,6,2\}$ and $\delta_{4m}(B)=\{-4,-8,\dots,4-4m\}$. So, $\delta_{4m}(A\u B)$ contains all non-zero even elements of $\Z_{4m}$. In particular it contains $\delta_{4m}(a,b)=b-a$. Therefore, $(a,b)\in D(x)\cont \sq(x)$ for some $x\in A\u B$ and the lemma holds.
\end{proof}

The next lemma is elementary and its proof is omitted.
\begin{lemma}\label{setplus1}
If $\emptyset\subsetneq S\subsetneq\zn$ then $S\neq\{x+1:x\in S\}$.
\end{lemma}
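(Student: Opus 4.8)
The plan is to argue by contradiction, exploiting that the shift $x\mapsto x+1$ generates the cyclic group $\zn$. Suppose that $S=\{x+1:x\in S\}$; equivalently, $S$ is closed under adding $1$. Since $\emp\subsetneq S$, I would fix some element $a\in S$ to seed the argument.

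From closure under $+1$, an immediate induction gives $a+k\in S$ for every nonnegative integer $k$. But $1$ generates $\zn$, so $\{a+k:k\ge 0\}=\zn$, and hence $S=\zn$. This contradicts $S\subsetneq\zn$, so no proper nonempty $S$ can be fixed by the shift, which is exactly the contrapositive of the claim.

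There is no real obstacle here: the content of the statement is just the observation that the only subsets of $\zn$ invariant under the generator $1$ are $\emp$ and $\zn$ themselves. The two strict inclusions in the hypothesis are each used exactly once — nonemptiness supplies the starting element $a$ for the induction, and properness supplies the contradiction — and indeed both $\emp$ and $\zn$ do satisfy $S=\{x+1:x\in S\}$, which is why both are correctly excluded.
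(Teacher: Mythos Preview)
Your argument is correct: closure under the shift $x\mapsto x+1$ together with nonemptiness forces $S=\zn$, contradicting properness. The paper itself omits the proof entirely, simply declaring the lemma elementary, so there is no alternative approach to compare against; your write-up supplies exactly the kind of one-line justification the reader is expected to fill in.
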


\begin{lemma}\label{consecutive}
Let $V_0$ and $V_1$ be consecutive vertical lines of $\zn^2$ with $n\ge 2$. Suppose that $X,Y \cont \zn^2$ satisfy $|Y|,|X|\le n-1$ and $V_0\u V_1\cont D(X)\u H(Y)$. Then $|X|+|Y|\ge n+1$.
\end{lemma}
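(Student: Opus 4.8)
The plan is to transfer the two-dimensional covering hypothesis into a pair of one-dimensional covering conditions on $\zn$ and then exploit the rigidity supplied by Lemma \ref{setplus1}. Write $a_0$ and $a_0+1$ for the first coordinates (columns) of the consecutive lines $V_0$ and $V_1$. By Lemma \ref{delta function} a point lies in $D(X)$ exactly when its $\delta$-value lies in $\Delta:=\delta(X)$, and a point $(c,d)$ lies in $H(Y)$ exactly when $d$ lies in the projection $P:=\{b:(a,b)\in Y\}$. Running through the points $(a_0,d)$ of $V_0$ and then the points $(a_0+1,d)$ of $V_1$, the inclusion $V_0\u V_1\cont D(X)\u H(Y)$ becomes the two equalities $(a_0+\Delta)\u P=\zn$ and $(a_0+1+\Delta)\u P=\zn$. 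Since $|\Delta|\le|X|$ and $|P|\le|Y|$, it suffices to prove $|\Delta|+|P|\ge n+1$.

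I would then set $S:=a_0+\Delta$, so that $|S|=|\Delta|$, and let $T:=\zn\del P$ be the complement of $P$. The first equality says $T\cont S$ and the second says $T\cont S+1$, i.e. $T-1\cont S$; hence $T\u(T-1)\cont S$ and $|S|\ge|T\u(T-1)|$. The hypotheses $|X|,|Y|\le n-1$ guarantee $|S|<n$ and $|P|<n$; moreover, from $S\u P=\zn$ together with $|S|<n$ we get $P\ne\emp$. Thus $\emp\subsetneq T\subsetneq\zn$, so Lemma \ref{setplus1} applies and gives $T\ne T+1$ (equivalently $T\ne T-1$). As $T$ and $T-1$ have the same size, this forces $(T-1)\del T\ne\emp$, whence $|T\u(T-1)|\ge|T|+1$.

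Combining the two bounds, $|\Delta|=|S|\ge|T|+1=(n-|P|)+1$, so $|\Delta|+|P|\ge n+1$ and therefore $|X|+|Y|\ge|\Delta|+|P|\ge n+1$, as required. The one step I expect to watch most carefully is the verification that the degenerate cases $T=\emp$ and $T=\zn$ do not arise, since Lemma \ref{setplus1} fails for them: both are excluded precisely by the cardinality bounds $|X|,|Y|\le n-1$ (which force $P\ne\zn$ and $S\ne\zn$), which is presumably why these hypotheses appear in the statement. Everything else is bookkeeping with the two shifted set-equalities.
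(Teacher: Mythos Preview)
Your proof is correct and essentially matches the paper's: both translate the covering condition on $V_0\cup V_1$ into a statement about subsets of $\zn$, exploit that passing from $V_0$ to $V_1$ shifts the diagonal contribution by $1$, and finish with Lemma~\ref{setplus1}. The only cosmetic difference is that the paper argues by contradiction (assuming $|X|+|Y|\le n$ to force disjointness and then deriving $\pi(A_0)=\pi(A_0)+1$), whereas you argue directly by applying Lemma~\ref{setplus1} to the complement $T=\zn\setminus P$ instead of to $S$.
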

\begin{proof}
Suppose the contrary. Say that $V_1=\{(a+1,b):(a,b)\in V_0\}$. For $i=0,1$, $V_i$ is the union of $A_i:=V_i\i D(X)$ and $B_i:=V_i\i H(Y)$. Note that $|A_0|=|A_1|\le|X|$ and $|B_0|=|B_1|\le|Y|$. For $i=0,1$, $|V_i|\le|A_i|+|B_i|=|X|+|Y|\le n=|V_i|$. So, $A_i\cap B_i=\emp$. Define a function $\pi:\zn^2\rightarrow \zn$ by $\pi(a,b)=b$. Since the restriction of $\pi$ to each vertical line is bijective, $\pi(A_0)=\zn-\pi(B_0)=\zn-\pi(B_1)=\pi(A_1)$. But $\pi(A_1):=\{t+1:t\in \pi(A_0)\}$, a contradiction to Lemma \ref{setplus1}.
\end{proof}

\begin{lemma}\label{lower xid odd}
	For each odd integer $n\ge 3$, $\xid(n)\ge (n+1)/2$.
\end{lemma}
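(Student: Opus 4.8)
The plan is to argue by contradiction and eliminate the equality case left open by Lemma \ref{lower bound}, which already gives $\xid(2n+1)\ge((2n+1)-1)/2=n$. So I would suppose $\diag_{2n+1}$ has a quower covering $\{\qw(x_1),\dots,\qw(x_k)\}$ with $k\le n$ and derive a contradiction. Writing $x_i=(a_i,b_i)$, set $A:=\{a_1,\dots,a_k\}$, $B:=\{b_1,\dots,b_k\}$ and $\Delta:=\delta(X)$, so $|A|,|B|,|\Delta|\le k\le n$ and the complements $A^c:=\Z_{2n+1}\setminus A$, $B^c:=\Z_{2n+1}\setminus B$ both have at least $n+1$ elements. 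Using Lemma \ref{delta function}, the covering condition becomes: a point $(a,b)$ with $b\ne a$ lies in $\qw(X)$ iff $a\in A$, or $b\in B$, or $b-a\in\Delta$; equivalently, whenever $a\in A^c$, $b\in B^c$ and $b\ne a$, one must have $b-a\in\Delta$.

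Next I would fix $a_0\in A^c$ and examine the $|B^c|$ distinct differences $\{b-a_0:b\in B^c\}$, all of which lie in $\Delta$ except possibly the value $0$ (attained only if $a_0\in B^c$). If $a_0\notin B^c$, then none of these differences is $0$, forcing $|\Delta|\ge|B^c|\ge n+1$, against $|\Delta|\le n$; hence $a_0\in B^c$. This yields $A^c\subseteq B^c$, and the symmetric count with $\{b_0-a:a\in A^c\}$ for fixed $b_0\in B^c$ gives $B^c\subseteq A^c$. So $A^c=B^c=:S$ with $|S|\ge n+1$. Re-running the count with $a_0\in S$ now gives $|S|-1\le|\Delta|\le n$, which forces $|S|=n+1$, $|\Delta|=n$, and moreover $(S-a_0)\setminus\{0\}=\Delta$ for every $a_0\in S$.

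The decisive step I would then take is to extract a forbidden subgroup from this rigidity. Since $(S-a_0)\setminus\{0\}=\Delta$ for all $a_0\in S$, the translate $S-a_0=\Delta\cup\{0\}$ does not depend on $a_0\in S$, so $S-a_0=S-a_1$, i.e. $S=S+(a_0-a_1)$, for all $a_0,a_1\in S$. Thus $S$ is invariant under translation by every element of $S-S$, and the stabilizer $H:=\{g\in\Z_{2n+1}:S+g=S\}$ is a subgroup containing the $|S|$-element set $S-a_0$; since $S$ is a union of $H$-cosets we also have $|H|\mid|S|$, whence $|H|=|S|=n+1$. But $|H|$ divides $|\Z_{2n+1}|=2n+1$, and $\gcd(n+1,2n+1)=\gcd(n+1,-1)=1$, so a subgroup of order $n+1>1$ cannot exist when $n\ge 1$. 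This contradiction shows no covering with $k\le n$ exists, hence $\xid(2n+1)\ge n+1$.

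I expect the main obstacle to be exactly the passage from the numerical bound to the structural one: the naive counting only reproduces the bound $k\ge n$ of Lemma \ref{lower bound}, and the real content is showing that the equality case forces the complements $A^c$ and $B^c$ to coincide with a highly structured set $S$ whose difference set collapses. Converting the combinatorial identity $(S-S)\setminus\{0\}=\Delta$ into a genuine subgroup via the stabilizer, and then invoking the arithmetic fact that $2n+1$ admits no subgroup of order $n+1$, is the crux; everything else is bookkeeping with the $\delta$-function.
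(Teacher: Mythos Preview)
Your argument is correct, but it follows a genuinely different route from the paper's. The paper's proof is a two-line application of Lemma~\ref{consecutive}: since $|X|=n<2n+1$, some two \emph{consecutive} vertical lines $V_0,V_1$ miss $V(X)$; then $V_0\cup V_1\subset H(X)\cup D(X\cup\{(1,1)\})$, and Lemma~\ref{consecutive} (whose own proof rests on Lemma~\ref{setplus1}, the observation that a proper nonempty subset of $\Z_m$ cannot equal its shift by $1$) forces $|X|+|X\cup\{(1,1)\}|\ge 2n+2$, impossible. So the paper exploits adjacency of two uncovered columns and a shift-by-one obstruction.

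You instead analyse the global structure of the complements $A^c$ and $B^c$: a counting argument shows $A^c=B^c=:S$ with $|S|=n+1$ and $(S-a_0)\setminus\{0\}=\Delta$ for every $a_0\in S$, hence $S$ is a coset of a subgroup of order $n+1$ in $\Z_{2n+1}$, contradicting $\gcd(n+1,2n+1)=1$. This is self-contained (it never invokes Lemma~\ref{consecutive}) and extracts more structural information about a hypothetical extremal configuration, at the cost of a longer additive-combinatorics detour. The paper's route is shorter because Lemma~\ref{consecutive} has already been isolated and is reused later in the proof of Lemma~\ref{xi0mod4}; your stabilizer argument, by contrast, is specific to the present lemma.
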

\begin{proof}
	Let $2m+1:=n$ with $m\ge 1$. By Lemma \ref{lower bound}, $\xid(2m+1)\ge m$. Suppose for a contradiction that $\xid(2m+1)=m$ and let $X:=\{x_1,\dots,x_m\}$ be an $m$-subset of $\diag(2m+1)$ such that $\{\sq(x):x\in X\}$ covers $\diag(2m+1)$. Thus, there are two consecutive vertical lines $V_0$ and $V_1$ in $\Z_{2m+1}^2$ avoiding $V(X)$. As $\diag(2m+1)\cont \sq(X)$, it follows that $V_0\u V_1\cont H(X)\u D(X\u\{(1,1)\})$. By Lemma \ref{consecutive}, $2m+1=2|X|+1\ge (2m+1)+1$, a contradiction.
\end{proof}

The next lemma was proved by L. Euler~\cite{Euler}. An alternative proof may be found in \cite[Corollary 1]{Evans}. The reader also may see a more general result in Wanless's survey \cite[Theorem 2]{Wanless}, proved by Maillet~\cite{Maillet}.

\begin{lemma}(Euler, 1779)\label{Latin square}
Let $Q=[q_{ij}]$ be a Latin square with even order $n\ge 2$. Suppose that $q_{ij}=q_{kl}$ if and only if $i - j \equiv k - l \mod n$. Then, $Q$ admits no set $X$ of $n$ entries such that each pair of entries of $X$ are in different rows, different columns and has different symbols. (Such a set is called a \defin{Latin transversal}.)
\end{lemma}

In Lemma \ref{Latin square}, supposing that $q_{ij}=q_{kl}$ if and only if $i + j \equiv k + l \mod n$ has the same effect; usually this is the way it is stated.

\begin{lemma}\label{xi0mod4}
If $n$ is a positive integer multiple of four, then $\xi(n)=1+n/2$.
\end{lemma}
\begin{proof} Let $4m:=n$. By Lemma \ref{xid0mod4}, $2m=\xid(4m)\le \xi(4m) \le \xid(4m)+1=2m+1$. So, all we have to prove is that $\xi(4m)\neq 2m$. Suppose for a contradiction that $\xi(4m)= 2m$. Let $X:=\{(a_t,b_t):t=1,\dots,2m\}$ be a subset of $\Z_{4m}^2$ such that $\{\sq(x):x\in X\}$ covers $\Z_{4m}^2$.

First we will prove that:
\begin{equation}\label{eq-xi0mod4}
\{a_1,\dots,a_{2m}\},\{b_1,\dots,b_{2m}\}\in\big\{\{1,3,\dots,4m-1\},\{2,4,\dots,4m\}\big\}.
\end{equation}
Suppose the contrary. Then, there are two consecutive horizontal lines avoiding $H(X)$ or two consecutive vertical lines avoiding $V(X)$. We may assume the later case. Let $V_0$ and $V_1$ be such lines. So, $V_0\u V_1\cont H(X)\u D(X)$. By Lemma \ref{consecutive}, $2|X|=4m\ge4m+1$, a contradiction. So, \eqref{eq-xi0mod4} holds.

By \eqref{eq-xi0mod4}, we may assume, without loss of generality, that $E:=\{2,4,\dots,4m\}=\{a_1,\dots,a_{2m}\}=\{b_1,\dots,b_{2m}\}$. 
So $X\cont E\times E$. Let $F:=\Z_{4m}-E$. The fact that $V(X)\u H(X)$ does not intersect $F\times F$ implies that $F\times F\cont D(X)$. We will use the function $\delta_k$ \ref{delta function} for $k=4m$. Note that, in each row or column of $F\times F$, $\delta_{4m}$ assumes $2m$ distinct values. By Lemma \ref{delta function}, $\delta_{4m}$ also assumes $2m$ distinct values on $X$. Now, construct a Latin square having $F$ as set of rows and columns such that the symbol in $(a,b)$ is $\delta_{4m}(a,b)$. The existence of this Latin square in $X$ contradicts Lemma \ref{Latin square}.
\end{proof}

Next we prove Theorem \ref{prodvalues}.\\
	\begin{figure}[h]\centering
	\begin{tikzpicture}[scale=0.8]
	{\color{red} \node at(2cm,2cm) {$\blacksquare$};   
	\draw[thick] (2cm,0.5cm)--(2cm,5.5cm);
	\draw[thick] (0.5cm,2cm)--(5.5cm,2cm);
	\draw[thick] (0.5cm,0.5cm)--(5.5cm,5.5cm); }
	{\color{blue}\node at(3cm,4cm) {$\blacklozenge$}; 
	\draw[thick] (0.5cm,4cm)--(5.5cm,4cm);
	\draw[thick] (3cm,0.5cm)--(3cm,5.5cm);
	\draw[thick] (0.5cm,1.5cm)--(4.5cm,5.5cm);
	\draw[thick](4.5cm,0.5cm)--(5.5cm,1.5cm);}
	{\color{orange} \node at(4cm,3cm) {$\CIRCLE$}; 
	\draw[thick] (0.5cm,3cm)--(5.5cm,3cm);
	\draw[thick] (4cm,0.5cm)--(4cm,5.5cm);
	\draw[thick] (1.5cm,0.5cm)--(5.5cm,4.5cm);
	\draw[thick](0.5cm,4.5cm)--(1.5cm,5.5cm);}

	{\draw[thick](0.5cm,0.5cm)--(0.5cm,5.5cm);}
	{\color{gray}
	\draw(1.5cm,0.5cm)--(1.5cm,5.5cm);
	\draw(2.5cm,0.5cm)--(2.5cm,5.5cm);
	\draw(3.5cm,0.5cm)--(3.5cm,5.5cm);
	\draw(4.5cm,0.5cm)--(4.5cm,5.5cm);
	}
	{\draw [thick] (5.5cm,0.5cm)--(5.5cm,5.5cm);}
	{\draw [thick]         (0.5cm,0.5cm)--(5.5cm,0.5cm);}
	{\color{gray}
	\draw[] (0.5cm,1.5cm)--(5.5cm,1.5cm);
	\draw[] (0.5cm,2.5cm)--(5.5cm,2.5cm);
	\draw[] (0.5cm,3.5cm)--(5.5cm,3.5cm);
	\draw[] (0.5cm,4.5cm)--(5.5cm,4.5cm);        
	}
	{\draw [thick] (0.5cm,5.5cm)--(5.5cm,5.5cm);}
	
	\fill (01cm,05cm) circle (0.10cm);\fill (02cm,05cm) circle (0.10cm);\fill (03cm,05cm) circle (0.10cm);\fill (04cm,05cm) circle (0.10cm);\fill (05cm,05cm) circle (0.10cm);
	\fill (01cm,04cm) circle (0.10cm);\fill (02cm,04cm) circle (0.10cm);\fill (03cm,04cm) circle (0.10cm);\fill (04cm,04cm) circle (0.10cm);\fill (05cm,04cm) circle (0.10cm);
	\fill (01cm,03cm) circle (0.10cm);\fill (02cm,03cm) circle (0.10cm);\fill (03cm,03cm) circle (0.10cm);\fill (04cm,03cm) circle (0.10cm);\fill (05cm,03cm) circle (0.10cm);
	\fill (01cm,02cm) circle (0.10cm);\fill (02cm,02cm) circle (0.10cm);\fill (03cm,02cm) circle (0.10cm);\fill (04cm,02cm) circle (0.10cm);\fill (05cm,02cm) circle (0.10cm);
	\fill (01cm,01cm) circle (0.10cm);\fill (02cm,01cm) circle (0.10cm);\fill (03cm,01cm) circle (0.10cm);\fill (04cm,01cm) circle (0.10cm);\fill (05cm,01cm) circle (0.10cm);

	{\color{red} \node at(2cm,2cm) {$\blacksquare$};}
	{\color{blue}\node at(3cm,4cm) {$\blacklozenge$};}
	{\color{orange} \node at(4cm,3cm) {$\CIRCLE$};}
\end{tikzpicture}$\qquad$
\begin{tikzpicture}[scale=0.5]
\fill[gray!20,opacity=1] (0.5cm,0.5cm)--(0.5,5.5cm)--(5.5cm,5.5cm)--(5.5cm,0.5cm)--cycle;
\fill[gray!20,opacity=1] (5.5cm,10.5cm)--(10.5,10.5cm)--(10.5cm,15.5cm)--(5.5cm,15.5cm)--cycle;
\fill[gray!20,opacity=1] (10.5cm,5.5cm)--(15.5,5.5cm)--(15.5cm,10.5cm)--(10.5cm,10.5cm)--cycle;

{\color{red}
\draw[thick] (0.5cm,14cm)--(15.5cm,14cm);
\draw[thick] (0.5cm,10.5cm)--(5.5cm,15.5cm);
\draw[thick] (5.5cm,0.5cm)--(15.5cm,10.5cm);
\draw[thick] (9cm,0.5cm)--(9cm,15.5cm);
\draw[thick] (0.5cm,12cm)--(15.5cm,12cm);
\draw[thick] (0.5cm,11.5cm)--(4.5cm,15.5cm);
\draw[thick] (4.5cm,0.5cm)--(15.5cm,11.5cm);
\draw[thick] (8cm,0.5cm)--(8cm,15.5cm);
\draw[thick] (0.5cm,13cm)--(15.5cm,13cm);
\draw[thick] (0.5cm,9.5cm)--(6.5cm,15.5cm);
\draw[thick] (6.5cm,0.5cm)--(15.5cm,9.5cm);
\draw[thick] (7cm,0.5cm)--(7cm,15.5cm);
}
{\color{blue}
\draw[thick] (0.5cm,0.5cm)--(15.5cm,15.5cm);
\draw[thick] (0.5cm,1.5cm)--(14.5cm,15.5cm);
\draw[thick] (1.5cm,0.5cm)--(15.5cm,14.5cm);
\draw[thick] (4cm,0.5cm)--(4cm,15.5cm);
\draw[thick] (3cm,0.5cm)--(3cm,15.5cm);
\draw[thick] (2cm,0.5cm)--(2cm,15.5cm);
\draw[thick] (14.5cm,0.5cm)--(15.5cm,1.5cm);
\draw[thick] (0.5cm,14.5cm)--(1.5cm,15.5cm);
\draw[thick] (0.5cm,4cm)--(15.5cm,4cm);
\draw[thick] (0.5cm,3cm)--(15.5cm,3cm);
\draw[thick] (0.5cm,2cm)--(15.5cm,2cm); 
}
{\color{orange}
\draw[thick] (0.5cm,9cm)--(15.5cm,9cm);
\draw[thick] (0.5cm,7cm)--(15.5cm,7cm);
\draw[thick] (0.5cm,8cm)--(15.5cm,8cm);
\draw[thick] (10.5cm,0.5cm)--(15.5cm,5.5cm);
\draw[thick] (9.5cm,0.5cm)--(15.5cm,6.5cm); 
\draw[thick] (11.5cm,0.5cm)--(15.5cm,4.5cm); 
\draw[thick] (0.5cm,5.5cm)--(10.5cm,15.5cm);
\draw[thick] (0.5cm,6.5cm)--(9.5cm,15.5cm); 
\draw[thick] (0.5cm,4.5cm)--(11.5cm,15.5cm);
\draw[thick] (14cm,0.5cm)--(14cm,15.5cm);
\draw[thick] (13cm,0.5cm)--(13cm,15.5cm);
\draw[thick] (12cm,0.5cm)--(12cm,15.5cm); 
}                                                                                  
                                                                                   
\fill (01cm,15cm) circle (0.10cm);\fill (02cm,15cm) circle (0.10cm);\fill (03cm,15cm) circle (0.10cm);\fill (04cm,15cm) circle (0.10cm);\fill (05cm,15cm) circle (0.10cm);
\fill (01cm,14cm) circle (0.10cm);\fill (02cm,14cm) circle (0.10cm);\fill (03cm,14cm) circle (0.10cm);\fill (04cm,14cm) circle (0.10cm);\fill (05cm,14cm) circle (0.10cm);
\fill (01cm,13cm) circle (0.10cm);\fill (02cm,13cm) circle (0.10cm);\fill (03cm,13cm) circle (0.10cm);\fill (04cm,13cm) circle (0.10cm);\fill (05cm,13cm) circle (0.10cm);
\fill (01cm,12cm) circle (0.10cm);\fill (02cm,12cm) circle (0.10cm);\fill (03cm,12cm) circle (0.10cm);\fill (04cm,12cm) circle (0.10cm);\fill (05cm,12cm) circle (0.10cm);
\fill (01cm,11cm) circle (0.10cm);\fill (02cm,11cm) circle (0.10cm);\fill (03cm,11cm) circle (0.10cm);\fill (04cm,11cm) circle (0.10cm);\fill (05cm,11cm) circle (0.10cm);
\fill (01cm,10cm) circle (0.10cm);\fill (02cm,10cm) circle (0.10cm);\fill (03cm,10cm) circle (0.10cm);\fill (04cm,10cm) circle (0.10cm);\fill (05cm,10cm) circle (0.10cm);
\fill (01cm,09cm) circle (0.10cm);\fill (02cm,09cm) circle (0.10cm);\fill (03cm,09cm) circle (0.10cm);\fill (04cm,09cm) circle (0.10cm);\fill (05cm,09cm) circle (0.10cm);
\fill (01cm,08cm) circle (0.10cm);\fill (02cm,08cm) circle (0.10cm);\fill (03cm,08cm) circle (0.10cm);\fill (04cm,08cm) circle (0.10cm);\fill (05cm,08cm) circle (0.10cm);
\fill (01cm,07cm) circle (0.10cm);\fill (02cm,07cm) circle (0.10cm);\fill (03cm,07cm) circle (0.10cm);\fill (04cm,07cm) circle (0.10cm);\fill (05cm,07cm) circle (0.10cm);
\fill (01cm,06cm) circle (0.10cm);\fill (02cm,06cm) circle (0.10cm);\fill (03cm,06cm) circle (0.10cm);\fill (04cm,06cm) circle (0.10cm);\fill (05cm,06cm) circle (0.10cm);
\fill (01cm,05cm) circle (0.10cm);\fill (02cm,05cm) circle (0.10cm);\fill (03cm,05cm) circle (0.10cm);\fill (04cm,05cm) circle (0.10cm);\fill (05cm,05cm) circle (0.10cm);
\fill (01cm,04cm) circle (0.10cm);\fill (02cm,04cm) circle (0.10cm);\fill (03cm,04cm) circle (0.10cm);\fill (04cm,04cm) circle (0.10cm);\fill (05cm,04cm) circle (0.10cm);
\fill (01cm,03cm) circle (0.10cm);\fill (02cm,03cm) circle (0.10cm);\fill (03cm,03cm) circle (0.10cm);\fill (04cm,03cm) circle (0.10cm);\fill (05cm,03cm) circle (0.10cm);
\fill (01cm,02cm) circle (0.10cm);\fill (02cm,02cm) circle (0.10cm);\fill (03cm,02cm) circle (0.10cm);\fill (04cm,02cm) circle (0.10cm);\fill (05cm,02cm) circle (0.10cm);
\fill (01cm,01cm) circle (0.10cm);\fill (02cm,01cm) circle (0.10cm);\fill (03cm,01cm) circle (0.10cm);\fill (04cm,01cm) circle (0.10cm);\fill (05cm,01cm) circle (0.10cm);

\fill (06cm,15cm) circle (0.10cm);\fill (07cm,15cm) circle (0.10cm);\fill (08cm,15cm) circle (0.10cm);\fill (09cm,15cm) circle (0.10cm);\fill (10cm,15cm) circle (0.10cm);
\fill (06cm,14cm) circle (0.10cm);\fill (07cm,14cm) circle (0.10cm);\fill (08cm,14cm) circle (0.10cm);\fill (09cm,14cm) circle (0.10cm);\fill (10cm,14cm) circle (0.10cm);
\fill (06cm,13cm) circle (0.10cm);\fill (07cm,13cm) circle (0.10cm);\fill (08cm,13cm) circle (0.10cm);\fill (09cm,13cm) circle (0.10cm);\fill (10cm,13cm) circle (0.10cm);
\fill (06cm,12cm) circle (0.10cm);\fill (07cm,12cm) circle (0.10cm);\fill (08cm,12cm) circle (0.10cm);\fill (09cm,12cm) circle (0.10cm);\fill (10cm,12cm) circle (0.10cm);
\fill (06cm,11cm) circle (0.10cm);\fill (07cm,11cm) circle (0.10cm);\fill (08cm,11cm) circle (0.10cm);\fill (09cm,11cm) circle (0.10cm);\fill (10cm,11cm) circle (0.10cm);
\fill (06cm,10cm) circle (0.10cm);\fill (07cm,10cm) circle (0.10cm);\fill (08cm,10cm) circle (0.10cm);\fill (09cm,10cm) circle (0.10cm);\fill (10cm,10cm) circle (0.10cm);
\fill (06cm,09cm) circle (0.10cm);\fill (07cm,09cm) circle (0.10cm);\fill (08cm,09cm) circle (0.10cm);\fill (09cm,09cm) circle (0.10cm);\fill (10cm,09cm) circle (0.10cm);
\fill (06cm,08cm) circle (0.10cm);\fill (07cm,08cm) circle (0.10cm);\fill (08cm,08cm) circle (0.10cm);\fill (09cm,08cm) circle (0.10cm);\fill (10cm,08cm) circle (0.10cm);
\fill (06cm,07cm) circle (0.10cm);\fill (07cm,07cm) circle (0.10cm);\fill (08cm,07cm) circle (0.10cm);\fill (09cm,07cm) circle (0.10cm);\fill (10cm,07cm) circle (0.10cm);
\fill (06cm,06cm) circle (0.10cm);\fill (07cm,06cm) circle (0.10cm);\fill (08cm,06cm) circle (0.10cm);\fill (09cm,06cm) circle (0.10cm);\fill (10cm,06cm) circle (0.10cm);
\fill (06cm,05cm) circle (0.10cm);\fill (07cm,05cm) circle (0.10cm);\fill (08cm,05cm) circle (0.10cm);\fill (09cm,05cm) circle (0.10cm);\fill (10cm,05cm) circle (0.10cm);
\fill (06cm,04cm) circle (0.10cm);\fill (07cm,04cm) circle (0.10cm);\fill (08cm,04cm) circle (0.10cm);\fill (09cm,04cm) circle (0.10cm);\fill (10cm,04cm) circle (0.10cm);
\fill (06cm,03cm) circle (0.10cm);\fill (07cm,03cm) circle (0.10cm);\fill (08cm,03cm) circle (0.10cm);\fill (09cm,03cm) circle (0.10cm);\fill (10cm,03cm) circle (0.10cm);
\fill (06cm,02cm) circle (0.10cm);\fill (07cm,02cm) circle (0.10cm);\fill (08cm,02cm) circle (0.10cm);\fill (09cm,02cm) circle (0.10cm);\fill (10cm,02cm) circle (0.10cm);
\fill (06cm,01cm) circle (0.10cm);\fill (07cm,01cm) circle (0.10cm);\fill (08cm,01cm) circle (0.10cm);\fill (09cm,01cm) circle (0.10cm);\fill (10cm,01cm) circle (0.10cm);

\fill (11cm,15cm) circle (0.10cm);\fill (12cm,15cm) circle (0.10cm);\fill (13cm,15cm) circle (0.10cm);\fill (14cm,15cm) circle (0.10cm);\fill (15cm,15cm) circle (0.10cm);
\fill (11cm,14cm) circle (0.10cm);\fill (12cm,14cm) circle (0.10cm);\fill (13cm,14cm) circle (0.10cm);\fill (14cm,14cm) circle (0.10cm);\fill (15cm,14cm) circle (0.10cm);
\fill (11cm,13cm) circle (0.10cm);\fill (12cm,13cm) circle (0.10cm);\fill (13cm,13cm) circle (0.10cm);\fill (14cm,13cm) circle (0.10cm);\fill (15cm,13cm) circle (0.10cm);
\fill (11cm,12cm) circle (0.10cm);\fill (12cm,12cm) circle (0.10cm);\fill (13cm,12cm) circle (0.10cm);\fill (14cm,12cm) circle (0.10cm);\fill (15cm,12cm) circle (0.10cm);
\fill (11cm,11cm) circle (0.10cm);\fill (12cm,11cm) circle (0.10cm);\fill (13cm,11cm) circle (0.10cm);\fill (14cm,11cm) circle (0.10cm);\fill (15cm,11cm) circle (0.10cm);
\fill (11cm,10cm) circle (0.10cm);\fill (12cm,10cm) circle (0.10cm);\fill (13cm,10cm) circle (0.10cm);\fill (14cm,10cm) circle (0.10cm);\fill (15cm,10cm) circle (0.10cm);
\fill (11cm,09cm) circle (0.10cm);\fill (12cm,09cm) circle (0.10cm);\fill (13cm,09cm) circle (0.10cm);\fill (14cm,09cm) circle (0.10cm);\fill (15cm,09cm) circle (0.10cm);
\fill (11cm,08cm) circle (0.10cm);\fill (12cm,08cm) circle (0.10cm);\fill (13cm,08cm) circle (0.10cm);\fill (14cm,08cm) circle (0.10cm);\fill (15cm,08cm) circle (0.10cm);
\fill (11cm,07cm) circle (0.10cm);\fill (12cm,07cm) circle (0.10cm);\fill (13cm,07cm) circle (0.10cm);\fill (14cm,07cm) circle (0.10cm);\fill (15cm,07cm) circle (0.10cm);
\fill (11cm,06cm) circle (0.10cm);\fill (12cm,06cm) circle (0.10cm);\fill (13cm,06cm) circle (0.10cm);\fill (14cm,06cm) circle (0.10cm);\fill (15cm,06cm) circle (0.10cm);
\fill (11cm,05cm) circle (0.10cm);\fill (12cm,05cm) circle (0.10cm);\fill (13cm,05cm) circle (0.10cm);\fill (14cm,05cm) circle (0.10cm);\fill (15cm,05cm) circle (0.10cm);
\fill (11cm,04cm) circle (0.10cm);\fill (12cm,04cm) circle (0.10cm);\fill (13cm,04cm) circle (0.10cm);\fill (14cm,04cm) circle (0.10cm);\fill (15cm,04cm) circle (0.10cm);
\fill (11cm,03cm) circle (0.10cm);\fill (12cm,03cm) circle (0.10cm);\fill (13cm,03cm) circle (0.10cm);\fill (14cm,03cm) circle (0.10cm);\fill (15cm,03cm) circle (0.10cm);
\fill (11cm,02cm) circle (0.10cm);\fill (12cm,02cm) circle (0.10cm);\fill (13cm,02cm) circle (0.10cm);\fill (14cm,02cm) circle (0.10cm);\fill (15cm,02cm) circle (0.10cm);
\fill (11cm,01cm) circle (0.10cm);\fill (12cm,01cm) circle (0.10cm);\fill (13cm,01cm) circle (0.10cm);\fill (14cm,01cm) circle (0.10cm);\fill (15cm,01cm) circle (0.10cm);

{\draw[thick](0.5cm,0.5cm)--(0.5cm,15.5cm);}
{\color{gray}
	\draw(1.5cm,0.5cm)--(1.5cm,15.5cm);
	\draw(2.5cm,0.5cm)--(2.5cm,15.5cm);
	\draw(3.5cm,0.5cm)--(3.5cm,15.5cm);
	\draw(4.5cm,0.5cm)--(4.5cm,15.5cm);
	}
{\draw [thick] (5.5cm,0.5cm)--(5.5cm,15.5cm);}
{\color{gray}
	\draw[] (6.5cm,0.5cm)--(6.5cm,15.5cm);
	\draw[] (7.5cm,0.5cm)--(7.5cm,15.5cm);
	\draw[] (08.5cm,0.5cm)--(08.5cm,15.5cm);
	\draw[] (09.5cm,0.5cm)--(09.5cm,15.5cm);     
	}
{\draw [thick] (10.5cm,0.5cm)--(10.5cm,15.5cm);}
{\color{gray}
	\draw[] (11.5cm,0.5cm)--(11.5cm,15.5cm);
	\draw[] (12.5cm,0.5cm)--(12.5cm,15.5cm);
	\draw[] (13.5cm,0.5cm)--(13.5cm,15.5cm);
	\draw[] (14.5cm,0.5cm)--(14.5cm,15.5cm);     
	}
{\draw[thick]          (15.5cm,0.5cm)--(15.5cm,15.5cm);}

{\draw [thick]         (0.5cm,0.5cm)--(15.5cm,0.5cm);}
{\color{gray}
	\draw[] (0.5cm,1.5cm)--(15.5cm,1.5cm);
	\draw[] (0.5cm,2.5cm)--(15.5cm,2.5cm);
	\draw[] (0.5cm,3.5cm)--(15.5cm,3.5cm);
	\draw[] (0.5cm,4.5cm)--(15.5cm,4.5cm);        
	}
{\draw [thick] (0.5cm,5.5cm)--(15.5cm,5.5cm);}
{\color{gray}
	\draw[] (0.5cm,6.5cm)--(15.5cm,6.5cm);
	\draw[] (0.5cm,7.5cm)--(15.5cm,7.5cm);
	\draw[] (0.5cm,08.5cm)--(15.5cm,08.5cm);
	\draw[] (0.5cm,09.5cm)--(15.5cm,09.5cm);      
	}
{\draw [thick] (0.5cm,10.5cm)--(15.5cm,10.5cm);}
{\color{gray}
	\draw[] (0.5cm,11.5cm)--(15.5cm,11.5cm);
	\draw[] (0.5cm,12.5cm)--(15.5cm,12.5cm);
	\draw[] (0.5cm,13.5cm)--(15.5cm,13.5cm);
	\draw[] (0.5cm,14.5cm)--(15.5cm,14.5cm);      
	}
{\draw[thick] (0.5cm,15.5cm)--(15.5cm,15.5cm);}

{\color{red}
\node at(7cm,12cm) {$\blacksquare$};
\node at(8cm,14cm) {$\blacklozenge$};
\node at(9cm,13cm) {$\CIRCLE$};}
{\color{blue}
\node at(2cm,2cm) {$\blacksquare$};
\node at(3cm,4cm) {$\blacklozenge$};
\node at(4cm,3cm) {$\CIRCLE$};}
{\color{orange}
\node at(12cm,7cm) {$\blacksquare$};
\node at(13cm,9cm) {$\blacklozenge$};
\node at(14cm,8cm) {$\CIRCLE$};}
\end{tikzpicture}
\caption{A covering of  $\Z_{15}^2$ by semiqueens constructed from a covering of $\Z_5^2$ with the method of the proof of Theorem \ref{prodvalues}}\label{fig-xiproduct}
\end{figure}

\begin{proofof}\emph{Proof of Theorem \ref{prodvalues}: }
As we will deal with both rings $\Z_n$ and $\Z_{mn}$, for distinction purposes, we will denote by $z+k\Z$ the residue class modulo $k$ of $z$ for each $z\in \Z$ and $k\in\{n,mn\}$. Let $\{\sq_{\Z_n}(x_i+n\Z,y_i+n\Z):i=1,\dots,\xi(\Z_n)\}$ be a covering for $\Z_n^2$. Define 
$$X:=\big\{\big(
(x_i+\lambda n)+mn\Z,(y_i-\lambda n)+mn\Z\big):i=1,\dots,\xi(\Z_n) \text{ and }\lambda\in \Z\big\}.$$
Note that $|X|\le m\xi(\Z_n)$. To prove item (a), it suffices to check that each element $(a+mn\Z,b+mn\Z)\in\Z_{mn}^2$ is in the semiqueen of some element of $X$. Let $i$ be the index such that, for $v_i:=(x_i+n\Z,y_i+n\Z)$, $u:=(a+n\Z,b+n\Z)\in\sq_{Z_n}(v_i)$. 
If $u\in V_{Z_n}(v_i)$, then there is an integer $\lambda$ such that $a=x_i+\lambda n$ and $(a+mn\Z,b+mn\Z)$ is in the semiqueen of $\big((x_i+\lambda n)+mn\Z,(y_i-\lambda n)+mn\Z\big)\in X$. If $u\in H_{Z_n}(v_i)$, we proceed analogously. So, we may assume that $u\in D_{Z_n}(v_i)$. We will use the function $\delta_k$ for $k=n$ on $\Z_{n}^2$ and for $k=mn$ on $\Z_{mn}$. As $u\in D_{Z_n}(v_i)$, it follows that $\delta_n(u)=\delta_n(v_i)$ and, therefore, there is an integer $\alpha$ such that $b-a=y_i-x_i-\alpha n$.
Since $mn$ is odd, $2+mn\Z$ is invertible in $\Z_{mn}$ and there is an integer $\lambda$ such that $2\lambda+mn\Z=\alpha+mn\Z$. Now 
\begin{eqnarray*}
\delta_{mn}\big((x_i+\lambda n)+mn\Z,(y_i-\lambda n)+mn\Z\big)
&=&
(y_i-x_i)-2\lambda n+mn\Z\\
&=&
b-a+mn\Z\\
&=&
\delta_{mn}(a+mn\Z,b+mn\Z).
\end{eqnarray*}
By Lemma \ref{delta function}, $(a+mn\Z,b+mn\Z)$ is in the semiqueen of $\big((x_i+\lambda n)+mn\Z,(y_i-\lambda n)+mn\Z\big)\in X$. This completes the proof for item (a). Item (b) follows from item (a) and Theorem \ref{main}.
\end{proofof}

\begin{lemma}\label{oddxi}
If $n$ is odd, then $\xi(n)\le\lf(2n+1)/3\rf$.
\end{lemma}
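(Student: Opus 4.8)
The plan is to exhibit an explicit covering of $\zn^2$ by exactly $k:=\lf (2n+1)/3\rf$ quowers. I would first recast the covering condition in terms of the three projections of the chosen centers. If $X\cont\zn^2$, write $A$, $B$, $\Delta$ for the sets of columns, rows, and (via Lemma \ref{delta function}) diagonal values $\delta(a,b)=b-a$ occupied by the points $(a,b)\in X$; thus $V(X)$ covers the columns in $A$, $H(X)$ the rows in $B$, and, by Lemma \ref{delta function}, $(a,b)\in D(X)$ iff $b-a\in\Delta$. A point $(a,b)$ escapes $V(X)\u H(X)$ precisely when $a\notin A$ and $b\notin B$, so $\qw(X)=\zn^2$ if and only if $b-a\in\Delta$ for all such points, i.e.\ $(\zn\del B)-(\zn\del A)\cont\Delta$.

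Set $t:=n-k$. From $k=\lf(2n+1)/3\rf$ one gets $3t\le n+1$, hence $2t-1\le k$ and $2t-1<n$ (I may assume $n\ge 3$, the case $n=1$ being trivial). I would then seek a covering in which columns and rows coincide: choosing representatives $\{0,1,\dots,n-1\}$ for $\zn$, let $A=B$ be the complement of a block $A^c$ of $t$ consecutive elements. Since $A^c$ is an interval and $2t-1<n$, the difference set $A^c-A^c=\{-(t-1),\dots,t-1\}$ has exactly $2t-1$ elements and does not wrap around, and the covering condition collapses to $A^c-A^c\cont\Delta$. I place the centers as the graph $X=\{(i,\pi(i)):i\in A\}$ of a permutation $\pi$ of $A$; this forces the columns and rows both to equal $A$ and makes $\Delta=\{\pi(i)-i:i\in A\}$. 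Everything then reduces to producing a permutation $\pi$ of $A$ whose displacement set contains the symmetric interval $\{-(t-1),\dots,t-1\}$.

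For this I would use a contiguous sub-block of $A$ of the odd size $m:=2t-1$ (which fits since $m\le k=|A|$) and define $\pi$ on it by the doubling map $i\mapsto 2i\mod m$, extended by the identity on the rest of $A$. The doubling map permutes the block because $m$ is odd, and a short computation shows its displacements, read as integers and hence as elements of $\zn$ since $m<n$, are exactly $0,1,\dots,t-1$ on the lower half and $-1,\dots,-(t-1)$ on the upper half, so $\Delta\supseteq A^c-A^c$ as needed. To finish I verify the cover directly: given $(a,b)\in\zn^2$, either $a\in A$ (covered by a vertical of $X$), or $b\in A=B$ (covered by a horizontal), or else $a,b\in A^c$, whence $b-a\in A^c-A^c\cont\Delta$ and $(a,b)\in D(X)$ by Lemma \ref{delta function}. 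This produces a cover of size $|X|=|A|=k$.

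I expect the main obstacle to be exactly the middle step, realizing the prescribed diagonal set by an honest permutation rather than an arbitrary placement of points; the balanced choice $A=B$ is what reduces the target to the symmetric interval $A^c-A^c$, and the doubling map resolves it cleanly, with the two inequalities $2t-1\le k$ and $2t-1<n$, both consequences of $3t\le n+1$, guaranteeing respectively that the sub-block fits inside $A$ and that the displacements remain distinct modulo $n$. Note that parity of $n$ plays no role in the construction itself: oddness only enters because $\lf(2n+1)/3\rf$ is the bound being asserted, whereas for even $n$ the same recipe still yields a (non-optimal) cover of that size.
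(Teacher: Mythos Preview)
Your proof is correct and follows a different, more uniform route than the paper's. Once unpacked, both arguments share the same skeleton: take the column and row sets to coincide with the complement $A$ of a short interval $A^c$, place the $k=|A|$ centers as the graph of a permutation $\pi$ of $A$, and arrange that the displacement set $\{\pi(i)-i:i\in A\}$ contains $A^c-A^c$. The paper, however, never isolates this reduction. It splits into cases according to the residue of $n$ modulo~$3$: the case $3\mid n$ is dispatched by invoking Theorem~\ref{prodvalues} together with $\xi(3)=2$, while for $n=3m+r$ with $r\in\{1,2\}$ the permutation is built concretely as the union of the northwest anti-diagonals of two adjacent square blocks of sizes $m+1$ and $m$, whose $\delta$-values are respectively the even and the odd integers in $[-m,m]$. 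Your doubling-map construction on a sub-block of odd length $2t-1$ realizes the required symmetric interval of displacements in a single stroke, with no case distinction and no dependence on Theorem~\ref{prodvalues}; the paper's version is more pictorial and meshes with the block viewpoint used elsewhere. Your closing remark that the parity of $n$ is irrelevant to the construction is also correct: what matters is that $2t-1$ is odd, which holds automatically.
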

\begin{proof}
Write $n=3m+r$ with $r\in\{0,1,2\}$. If $r=0$, then, as $\xi(3)=2$, by Theorem \ref{prodvalues}, $\xi(n)\le 2m= \lf(2n+1)/3\rf$ and the lemma holds. So, assume that $r\in\{1,2\}$. We shall prove that $\xi(n)\le 2m+1$. Consider a subdivision of the board as below:
$$\begin{array}{|c|c|c|}
	\hline
	Q_{1,3}&Q_{2,3}&Q_{3,3}\\
	\hline
	Q_{1,2}&Q_{2,2}&Q_{3,2}\\
	\hline
	Q_{1,1}&Q_{2,1}&Q_{3,1}\\
	\hline
  \end{array}\quad,
$$
where $Q_{1,1}$, $Q_{2,2}$ and $Q_{3,3}$ are square blocks with respective orders $m+1$, $m$ and $m+r-1$. Let $X_i$ be the set of the pairs of $\zn^2$ in the southeast-northwest diagonal of $Q_{i,i}$. Define $X=X_1\u X_2$. We will prove that $\{\sq(x):x\in X\}$ covers the board. If $Q_{i,j}\neq Q_{3,3}$ it is clear that $Q_{i,j}\cont {\bigcup_{x\in X}\big(H(x)\u V(x)\big)}$. So, let $y\in Q_{3,3}$. We shall prove that $y\in D(x)$ for some $x\in X$. If $c$ is a coordinate of $y$, then $2m+2\le c\le 3m+r$. Therefore, $-m\le-m-r+2\le\delta_n(y)\le m+r-2\le m$. Note that $\delta_n(X_1)=\{m,m-2,m-4,\dots,4-m,2-m,-m\}$ and $\delta_n(X_2)=\{m-1,m-3,\dots,3-m,1-m\}$. So, $\delta_n(X_1\u X_2)=\{m,m-1,\dots,1-m,-m\}$ and $\delta_n(y)\in \delta_n(X)$. By Lemma \ref{delta function}, $y$ is in $D(x)$ for some $x\in X$ and the lemma holds.
\end{proof}

\begin{proofof}\emph{Proof of Theorem \ref{xivalues}: }
	Items (a) and (b) follow from Lemmas \ref{xi2mod4} and \ref{xi0mod4}, respectively. Item (c) follows from Lemmas \ref{lower xid odd} and \ref{oddxi}.
\end{proofof}

\begin{proofof}\emph{Proof of Theorem \ref{xidvalues}: }
	Item (a) follows from Lemmas \ref{xi2mod4} and \ref{xid0mod4}. Item (b) follows from Lemmas \ref{lower xid odd} and \ref{oddxi}.
\end{proofof}

\section{From $\fq^3$ to the projective plane}\label{sec-projective}

In this section, we establish relations between short coverings and coverings by semiqueens using the projective plane as a link between them. We prove Theorem \ref{main} at the end of this section.

We define the \defin{projective plane} $\pq$ as the set of the $1$-dimensional vector subspaces of $\fq^3$; we call its elements \defin{points}. We say that $L\cont \pq$ is a \defin{line} if the union of the elements of $L$ is a $2$-dimensional vector subspace of $\fq^3$. We denote the subspace spanned by $(\alpha,\beta,\gamma)\in \fq^3-\{0\}$ by homogeneous coordinates $(\alpha:\beta:\gamma)\in \pq$.


We say that the points of $\pq$ are \defin{cardinal}, \defin{coast} or \defin{midland} when they have exactly one, two or three non-zero coordinates respectively. We denote the cardinal points by $c_1:=(1:0:0)$, $c_2:=(0:1:0)$ and $c_3:=(0:0:1)$. We also denote the line containing the points $u$ and $v$ by $\l{u,v}$, provided $u\neq v$, and, for convenience, $\l{u,u}:=\{u\}$. We say that a line of $\pq$ is a \defin{midland line} if it contains a midland point and a \defin{coast line} otherwise. Note that the unique coast lines are $\l{c_1,c_2}$, $\l{c_1,c_3}$ and $\l{c_2,c_3}$. Moreover, we denote by $e_i$ the $i$-th vector in the canonical basis of $\fq^3$, and by $[v_1,\dots,v_n]$ the subspace of $\fq^3$ spanned by $v_1,\dots,v_n$. We denote by $EB[v,r]$ the extended ball with radius $r$ and center $v$. The next lemma is easy to check:

\begin{lemma}\label{balls and windroses}
If $v\in \fq^3-\{0\}$, then $EB[v,1]$ is the union of the members of $\l{[v],c_1}\u \l{[v],c_2}\u \l{[v],c_3}$.
\end{lemma}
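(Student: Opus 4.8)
The plan is to unpack the definitions and translate the Hamming geometry of $\fq^3$ into the incidence structure of $\pq$. Recall that $B_E[v,1]$ is the radius-$1$ extend ball centred at the line $[v]$, i.e. the set of all $w\in\fq^3$ with Hamming distance at most $1$ from some scalar multiple of $v$. So the first step is to characterise membership: a vector $w$ lies in $B_E[v,1]$ exactly when there is a scalar $\lambda\in\fq$ such that $w$ and $\lambda v$ agree in at least two of the three coordinates (distance $0$ gives $w=\lambda v$, distance $1$ gives agreement in exactly two). I would phrase the whole proof at the level of the nonzero vectors, noting that everything is invariant under scaling so it descends to points of $\pq$.

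Next I would show the two inclusions by relating "agreeing with $\lambda v$ in coordinate $i$" to the line $\l{[v],c_i}$. The key observation is that $c_i=[e_i]$, and the $2$-dimensional subspace spanned by $v$ and $e_i$ consists precisely of the vectors of the form $\lambda v+\mu e_i$; such a vector agrees with $\lambda v$ in the two coordinates other than $i$ (for a suitable nonzero $\lambda$ when those two coordinates of $v$ are not already forced, and one must also handle the degenerate case where $w$ is a scalar multiple of $e_i$ or of $v$ itself). Thus a nonzero $w$ is at Hamming distance $\le 1$ from the subspace $[v]$ if and only if $[w]$ lies on one of the three lines $\l{[v],c_1}$, $\l{[v],c_2}$, $\l{[v],c_3}$, which is exactly the statement that $B_E[v,1]$ is the union of the members (the points, i.e. the $1$-dimensional subspaces) of those three lines.

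Concretely I would argue: if $w\in B_E[v,1]\setminus\{0\}$, pick $\lambda$ and the coordinate $i$ in which $w$ and $\lambda v$ may differ; then $w-\lambda v\in[e_i]$, so $w\in[v,e_i]$, whence $[w]\in\l{[v],c_i}$. Conversely, if $[w]$ lies on $\l{[v],c_i}=\{[\lambda v+\mu e_i]\}$, then $w$ is proportional to some $\lambda v+\mu e_i$, and by scaling one finds a representative agreeing with a multiple of $v$ in the two coordinates $\ne i$, giving Hamming distance $\le 1$. The mild subtlety is the convention $\l{u,u}:=\{u\}$ when $[v]=c_i$ for some $i$ (i.e. $v$ is itself cardinal), so one should check the statement still reads correctly in that degenerate case; this is the only place where a little care is needed, and it is where I expect the bookkeeping to be most delicate rather than genuinely hard. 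Since the lemma is asserted to be "easy to check," I anticipate the real content is just this clean dictionary between coordinate-agreement and the three cardinal lines through $[v]$, and no deeper obstacle should arise.
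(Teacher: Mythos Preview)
Your proposal is correct and is precisely the direct verification the paper has in mind; the paper omits the proof entirely, calling the lemma ``easy to check,'' and your dictionary between ``$w-\lambda v$ has support in $\{i\}$'' and ``$[w]\in\l{[v],c_i}$'' is exactly that check. Your handling of the degenerate case $[v]=c_i$ via the convention $\l{u,u}=\{u\}$ is the only genuine subtlety, and you treat it correctly.
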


Motivated by Lemma \ref{balls and windroses}, we define the \defin{compass rose} of $p\in \pq$ as $$W(p):=\l{p,c_1}\u \l{p,c_2}\u \l{p,c_3}.$$
From Lemma \ref{balls and windroses}, we may conclude:

\begin{corollary}\label{equivalent covers 1}
Let $p_1,\dots,p_n\in \pq$ and for $i=1,\dots, n$, let $v_i\in p_i-\{0\}$. Then, $\{W(p_1),\dots,W(p_n)\}$ covers $\pq$ if and only if $\{EB[v_1,1],\dots, EB[v_n,1]\}$ covers $\fq^3$. Moreover, $c(q)$ is the size of a minimum covering of $PG(2,q)$ by compass roses.
\end{corollary}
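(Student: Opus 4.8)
The plan is to exploit the fact that, by Lemma \ref{balls and windroses}, each extend ball $B_E[v_i,1]$ is exactly the union (inside $\fqt$) of the $1$-dimensional subspaces that constitute the points of the wind rose $W(p_i)$, where $p_i:=[v_i]$. Thus the two coverings are two descriptions of the same data: one at the level of the vectors of $\fqt$ and the other at the level of the points of $\pq$. The whole argument is translating between these two levels.

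First I would record the elementary dictionary between the two ambient sets. Every nonzero vector $w\in\fqt$ lies in exactly one point of $\pq$, namely $[w]$, since two distinct $1$-dimensional subspaces meet only in $0$; and the zero vector lies in every subspace. Consequently, for any family $S\cont\pq$ of points, the union of the members of $S$ (each member viewed as a subset of $\fqt$) equals all of $\fqt$ if and only if $S=\pq$: if some point $q_0$ were omitted, its nonzero vectors would belong to no member of $S$, while $S=\pq$ clearly exhausts $\fqt$.

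Next I would combine this with Lemma \ref{balls and windroses}, which gives $B_E[v_i,1]=\bigcup_{q\in W(p_i)}q$, the union ranging over the points $q$ of the wind rose. Taking the union over $i$ and interchanging the two unions yields
\[
\bigcup_{i=1}^n B_E[v_i,1]\;=\;\bigcup_{q\,\in\,W(p_1)\u\cdots\u W(p_n)}q.
\]
By the dictionary of the previous paragraph, the left-hand side equals $\fqt$ if and only if $W(p_1)\u\cdots\u W(p_n)=\pq$, which is precisely the statement that $\{W(p_1),\dots,W(p_n)\}$ covers $\pq$. This delivers both implications simultaneously.

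There is no substantial obstacle here, as all the geometric content has already been isolated in Lemma \ref{balls and windroses}. The only point deserving a moment of care is the bookkeeping around the zero vector, which sits in every ball and in every point and therefore contributes nothing to either covering condition; once that is noted, the equivalence is a direct rewriting.
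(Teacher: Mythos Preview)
Your proof is correct and follows exactly the approach the paper intends: the paper derives the corollary directly from Lemma \ref{balls and windroses} without further argument, and your write-up simply spells out the elementary translation between unions of $1$-dimensional subspaces covering $\fqt$ and sets of points covering $\pq$. There is nothing to add or correct.
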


We say that a compass rose $W(p)$ is \defin{cardinal}, \defin{coast} or \defin{midland} according to which of these adjectives applies to $p$. It is clear that $W(p_1)=W(p_2)$ implies $p_1=p_2$. So, exactly one of these adjectives applies to a particular compass rose. The following properties of compass roses are elementary and easy to check:

\begin{lemma}\label{compass roses characterizations} Each midland compass rose is the union of three distinct midland lines, each coast compass rose is the union of a coast and a midland line and each cardinal compass rose is the union of two distinct coast lines.
\end{lemma}

\begin{lemma}\label{covering coast}
Let $q$ be a prime power and suppose that $c(q)\le q-2$. Then, every minimum covering $\C$ of $\pq$ by compass roses contains at least two non-midland compass roses. In particular, each coast line is contained in a member of $\C$.
\end{lemma}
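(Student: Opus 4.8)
The plan is to reduce the whole statement to a bookkeeping of how the members of $\C$ cover the \emph{coast points} lying on the three coast lines $\l{c_1,c_2}$, $\l{c_1,c_3}$ and $\l{c_2,c_3}$ (which are readily checked to be all the coast lines, each carrying its two cardinal points together with $q-1$ coast points). Using Lemma~\ref{wind roses characterizations} together with the fact that two distinct lines of $\pq$ meet in exactly one point, I would first record how each of the three types of wind rose meets a fixed coast line $\ell$. A midland wind rose is a union of three midland lines; two of them pass through the cardinal points carried by $\ell$, while the third meets $\ell$ in a single further point, which one checks is a coast point. Hence a midland wind rose covers \emph{exactly one} coast point of $\ell$. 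A coast wind rose is the union of a coast line and a midland line, so it either contains $\ell$ (covering all $q-1$ of its coast points) or meets $\ell$ only in cardinal points (covering none). A cardinal wind rose is the union of the two coast lines through its cardinal point, so it contains $\ell$ when $\ell$ is one of those two lines and otherwise covers no coast point of $\ell$. The upshot is a clean dichotomy on $\ell$: every non-midland member of $\C$ contributes either all $q-1$ coast points of $\ell$ or none of them, while every midland member contributes exactly one.

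With this in hand I would fix a coast line $\ell$ and argue that some member of $\C$ contains $\ell$. If none did, then all $q-1$ coast points of $\ell$ would have to be supplied by midland members, each supplying only one, forcing $|\C|\ge q-1$. This contradicts $|\C|=c(q)\le q-2$. Thus every coast line is contained in a member of $\C$, which is exactly the ``in particular'' clause.

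To extract two non-midland members, I would observe first that any member containing a coast line is non-midland, since a midland wind rose is a union of three midland lines and hence (meeting $\ell$ in at most three points) cannot contain a coast line. Next, again by Lemma~\ref{wind roses characterizations}, a coast wind rose contains exactly one coast line and a cardinal wind rose exactly two, so no single wind rose contains all three coast lines $\l{c_1,c_2}$, $\l{c_1,c_3}$, $\l{c_2,c_3}$. Since each of these three is contained in some member of $\C$, at least two distinct non-midland members are required, completing the proof.

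The one genuinely delicate point is the quantitative claim that a midland wind rose meets a coast line in \emph{exactly one} coast point, rather than merely in at most three points; this is precisely what renders the midland members too weak to cover a coast line unaided, and it is where the hypothesis $c(q)\le q-2$ does its work. I expect this verification — distinguishing, among the three intersection points of the three midland lines with $\ell$, the two cardinal ones from the single coast one — to be the main technical step, and it is most transparently carried out in the homogeneous coordinates $c_i=e_i$ with $\ell=\{x_i=0\}$.
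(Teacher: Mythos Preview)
Your proposal is correct and follows essentially the same route as the paper: show first that every coast line must lie inside some member of $\C$ (otherwise its $q-1$ coast points would each need a separate wind rose, contradicting $c(q)\le q-2$), and then deduce from Lemma~\ref{wind roses characterizations} that at least two non-midland members are needed to contain all three coast lines. The paper compresses your case analysis into the single observation that any wind rose not containing $L$ meets the coast points of $L$ in at most one point, but the content is the same.
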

\begin{proof} Since $\pq$ has three distinct coast lines, the first part of the lemma follows from the second part and from Lemma \ref{compass roses characterizations}. So, let us prove the second part. Suppose that it fails. Let $\C$ be a minimum covering of $\pq$ by compass roses  such that no member contains a fixed coast line $L$. Let $K$ be the set of coast points in $L$. Since no member of $\C$ contains $L$, each compass rose in $\C$ meets $K$ in at most one point. So, $q-1=|K|\le |\C|=c(q)\le q-2$, a contradiction.
\end{proof}

\begin{lemma}\label{particular cover} Let $q$ be a prime power and suppose that $c(q)\le q-2$. Then, there is a minimum covering of $\pq$ by compass roses containing precisely one cardinal compass rose and one coast compass rose.
\end{lemma}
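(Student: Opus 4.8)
The plan is to start from an arbitrary minimum covering $\C$ of $\pq$ by wind roses and to apply a sequence of \emph{size-preserving exchanges} that bring it to the desired shape; since each exchange keeps $|\C|=c(q)$ and keeps $\C$ a covering, the final family is automatically a minimum covering. Throughout I use Lemma \ref{wind roses characterizations}: a cardinal wind rose $W(c_i)$ is the union of the two coast lines through $c_i$, so it covers two coast lines and \textbf{no} midland point; a coast wind rose is the union of its coast line with a single midland line (through the cardinal point not on that coast line); and a midland wind rose is a union of three midland lines, hence contains no coast line (three midland lines meet a coast line in at most $3<q+1$ of its points, as $q\ge 3$). Writing $L_1=\l{c_2,c_3}$, $L_2=\l{c_1,c_3}$, $L_3=\l{c_1,c_2}$ for the three coast lines, Lemma \ref{covering coast} guarantees each $L_k$ lies inside some member of $\C$, and by the above that member is cardinal or coast. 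Let $a$ and $b$ denote the numbers of cardinal and coast wind roses in $\C$.

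First I would force $a=1$. One cannot have $a=3$: then $W(c_3)=L_1\u L_2\cont (L_2\u L_3)\u(L_1\u L_3)=W(c_1)\u W(c_2)$, so $W(c_3)$ is redundant, contradicting minimality; thus $a\le 2$. If $a=2$, say (up to relabeling coordinates) $W(c_1),W(c_2)\in\C$: deleting $W(c_2)$ loses only the coast points of $L_1$ (its remaining coast points and all its cardinal points already lie in $W(c_1)$), and $W(c_2)$ carries no midland point, so adding any coast wind rose $W(p_0)$ with $p_0\in L_1$ restores coverage with the same cardinality and leaves $a=1$. If $a=0$, then each $L_k$ lies in a coast wind rose, so $b\ge 3$; pick coast wind roses $W(p_1),W(p_2)$ with $p_1\in L_1$, $p_2\in L_2$ and midland lines $M_1\ni c_1$, $M_2\ni c_2$. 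I replace this pair by the cardinal wind rose $W(c_3)=L_1\u L_2$ together with the midland wind rose $W(r)$ for $r:=M_1\i M_2$. A short check — each midland line contains exactly one cardinal point and exactly one coast point, and those of $M_1$ and $M_2$ differ — shows $r$ is midland, so $W(r)\supseteq M_1\u M_2$; hence the exchange is size-preserving, recovers $L_1,L_2$ (via $W(c_3)$) and $M_1,M_2$ (via $W(r)$), leaves $L_3$ untouched, and yields $a=1$.

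With $a=1$ in hand I would reduce to $b=1$. Say the cardinal wind rose is $W(c_1)$, covering $L_2$ and $L_3$. By Lemma \ref{covering coast} the remaining coast line $L_1$ sits inside a member; it is not in $W(c_1)$ and not in any midland wind rose, so it lies in a coast wind rose $W(p_0)$ with $p_0\in L_1$, which I keep. Every \emph{other} coast wind rose $W(p')$ has its coast line among $L_2,L_3$ (contained in $W(c_1)$) or equal to $L_1$ (contained in $W(p_0)$); in all cases that coast line stays covered by a retained member. Consequently, deleting $W(p')$ loses only the midland points on its midland line $M'$, and these are recovered by adding a midland wind rose $W(r')$ through $M'$: taking $r'\in M'$ midland and $c$ the cardinal point on $M'$ gives $\l{r',c}=M'$, so $W(r')\supseteq M'$. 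Performing this replacement for every coast wind rose except $W(p_0)$ is size-preserving and leaves precisely one coast wind rose, while not touching $W(c_1)$. The resulting family therefore covers $\pq$, has cardinality $c(q)$, and contains exactly one cardinal and one coast wind rose, as required.

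I expect the main obstacle to be Step 2 (and its mirror in the $a=0$ case of Step 1): a superfluous coast wind rose cannot simply be discarded, because it is the sole contributor of the midland points on its midland line. The key point that makes the argument go through is that those midland points can be restored by a midland wind rose through the \emph{same} midland line, while the coverage of the coast lines is rigidly controlled by Lemma \ref{covering coast} (each whole coast line must sit in one member). The only genuinely computational part is checking that the restored pieces recover every lost point — in particular that $M_1\i M_2$ in Step 1 is a midland point and that the midland line of a coast wind rose meets its coast line only in the expected point — which is where the small amount of projective bookkeeping lives.
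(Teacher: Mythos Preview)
Your argument is correct and mirrors the paper's proof: the same two exchange moves appear (trading two cardinal wind roses for one cardinal plus one coast; trading two coast wind roses $W(p_1),W(p_2)$ for the cardinal $W(c_3)$ and the midland $W(M_1\cap M_2)$), followed by the same clean-up that replaces any surplus coast wind rose by a midland one through its midland line. The only cosmetic difference is that the paper packages these exchanges via an extremal choice (maximize the number of midland, then coast, wind roses) rather than performing them iteratively as you do.
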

\begin{proof}
Choose a minimum covering $\C$ of $\pq$ by compass roses maximizing the number of midland compass roses primarily and coast compass roses secondarily. There are three coast lines in $\pq$: the members of $\L:=\{\l{c_1,c_2},\l{c_1,c_3},\l{c_2,c_3}\}$. By Lemmas \ref{covering coast} and \ref{compass roses characterizations}, the members of $\L$ are covered by:
\begin{enumerate}
 \item [(i)] One cardinal and one coast compass rose of $\C$,
 \item [(ii)] Two cardinal compass roses of $\C$, or
 \item [(iii)] Three coast compass roses of $\C$.
\end{enumerate}
We shall prove that (i) occurs. Indeed, first suppose for a contradiction that (ii) holds. Say that the members of $\L$ are covered by $W(c_1)$ and $W(c_2)$. If $p$ is a coast point of $\l{c_2,c_3}$, then $W(c_1)$ and $W(p)$ are enough to cover the coast lines of $\pq$. Hence $(\C-W(c_2))\u W(p)$ contradicts the secondary maximality of $\C$. Thus, (ii) does not hold.

Now, suppose that (iii) holds. The coast lines of $\pq$ are covered by three coast compass roses \linebreak$W(p_1),W(p_2),W(p_3)\in \C$. It is clear that $p_1$, $p_2$ and $p_3$ are in different coast lines. For $\{i,j,k\}=\{1,2,3\}$, say that $p_k\in \l{c_i,c_j}$. Let $x$ be the intersection point of $\l{c_2,p_2}$ and $\l{c_3,p_3}$. Note that $x$ is a midland point.  We claim that 
$$\C':=(\C-\{W(p_2),W(p_3)\})\u \{W(c_1),W(x)\}$$
contradicts the primary maximality of $\C$. Note that $\C'$ has more midland compass roses than $\C$ and $|\C'|\le |\C|$. It is left to to show that $\C'$ covers $\pq$. For this purpose, it is enough to prove 
that $W(p_2)\u W(p_3)\cont W(c_1)\u W(x)$. Indeed, as $p_2\in \l{c_1,c_3}$, it follows that $W(p_2)=\l{c_1,c_3}\u \l{c_2,p_2}$, but $\l{c_2,p_2}=\l{c_2,x}\cont W(x)$ and $\l{c_1,c_3}\cont W(c_1)$. Moreover, $W(p_3)=\l{c_1,c_2}\u \l{c_3,p_3}$, but $\l{c_3,p_3}=\l{c_3,x}\cont W(x)$  and $\l{c_1,c_2}\cont W(c_1)$. So, $\C'$ covers $PG(2,q)$ and (iii) does not occur. Therefore, (i) holds.

Now, let $W_1$ and $W_2$ be the respective compass roses described in (i). It is left to prove that $W$ is midland if $W\in \C -\{W_1, W_2\}$. As all cardinal compass roses are contained in $W_1\u W_2$, by the minimality of $\C$, it follows that $W$ is not cardinal. If $W$ is coast, then $W$ is the union of a coast line $C$ and a midland line $M$. But, $C\cont W_1\u W_2$ and, if $x\in M-C$ is a midland point, then $M\cont W(x)$. Thus $(\C-W)\u W(x)$ violates the primary maximality of $\C$. Therefore, $W$ is midland and the lemma holds.
\end{proof}

We define a bijection $f:\pq\rightarrow\pq$ to be a \defin{projective automorphism} if $f\big(\l{x,y}\big)=\l{f(x),f(y)}$ for all $x,y\in \pq$.

\begin{lemma}\label{isomorphism-isometry}
If $f$ is a projective automorphism of $\pq$ carrying cardinal points into cardinal points, then $f(W(x))=W(f(x))$ for each $x\in \pq$. Moreover, $x$ is midland (resp. coast, cardinal) if and only if $f(x)$ is midland (resp. coast, cardinal).
\end{lemma}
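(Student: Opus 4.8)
The plan is to verify the wind-rose identity by a direct computation that rests on $f$ permuting the cardinal points, and then to describe the three point types through incidence with the coast lines, a description that is manifestly transported by $f$. The first thing I would record is that $f$ permutes the set $\{c_1,c_2,c_3\}$: by hypothesis each $f(c_i)$ is cardinal, and since $f$ is a bijection of $\pq$ while there are exactly three cardinal points, the restriction of $f$ to $\{c_1,c_2,c_3\}$ is an injection of a three-element set into itself, hence a bijection. Thus $\{f(c_1),f(c_2),f(c_3)\}=\{c_1,c_2,c_3\}$.

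For the first assertion I would use that $f$ is a projective automorphism, so $f(\l{x,c_i})=\l{f(x),f(c_i)}$ for each $i$ (valid also when $x=c_i$ by the convention $\l{u,u}=\{u\}$), together with the fact that $f$ commutes with unions. This gives
\[
f(W(x))=\bigcup_{i=1}^{3} f\big(\l{x,c_i}\big)=\bigcup_{i=1}^{3}\l{f(x),f(c_i)}=\bigcup_{i=1}^{3}\l{f(x),c_i}=W(f(x)),
\]
where the third equality merely reindexes the union, which is legitimate precisely because $f$ permutes the cardinal points and so the two families of lines coincide.

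For the second assertion the key step is a coordinate-free characterization of the three types using the coast lines $\l{c_i,c_j}$. From the homogeneous-coordinate descriptions one checks that a point is midland if and only if it lies on none of the three coast lines, that every coast or cardinal point lies on at least one coast line, and that "cardinal" simply means being one of $c_1,c_2,c_3$. Since $f$ permutes the cardinal points, it sends each coast line $\l{c_i,c_j}$ to the coast line $\l{f(c_i),f(c_j)}$, and hence permutes the three coast lines; therefore $x$ lies on a coast line if and only if $f(x)$ does. Combining this with the already-established preservation of "cardinal" shows that each type is carried to itself, and applying the same reasoning to $f^{-1}$ (again a projective automorphism permuting the cardinal points) supplies the converse implications. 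I expect no genuine obstacle in this lemma: the whole content is the transport of the incidence structure through $f$, and the only points needing care are the reindexing justification in the display above and the clean incidence characterization of midland points as those lying on no coast line.
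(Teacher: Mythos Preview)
Your proof is correct and follows essentially the same route as the paper: a direct computation for $f(W(x))=W(f(x))$ using that $f$ permutes the cardinal points, followed by an incidence characterization (via the coast lines) to show each point type is preserved. If anything, you are slightly more careful than the paper in spelling out why $f$ restricts to a bijection of $\{c_1,c_2,c_3\}$ and why the reindexing in the displayed union is legitimate.
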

\begin{proof}
For $x\in\pq$:
\begin{eqnarray*}
f(W(x))&=&f\left(\l{x,c_1}\u\l{x,c_2}\u\l{x,c_3}\right)\\
&=&f\left(\l{x,c_1}\right)\u f\left(\l{x,c_2}\right)\u f\left(\l{x,c_3}\right)\\ 
&=&\l{f(x),f(c_1)}\u \l{f(x),f(c_2)}\u \l{f(x),f(c_3)}\\
&=&\l{f(x),c_1}\u \l{f(x),c_2}\u \l{f(x),c_3}\\ 
&=&W(f(x)).\end{eqnarray*}
This proves the first part of the lemma. For the second part, by hypothesis, $x$ is cardinal if and only if $f(x)$ is cardinal. Also, $x$ is coast if and only if $x$ is not cardinal but is in the line containing two cardinal points, thus $f(x)$ is also coast. Therefore, $x$ is coast if and only $f(x)$ is coast. By elimination, this implies that $x$ is midland if and only if $f(x)$ is midland.
\end{proof}

The next lemma has a straightforward proof.
\begin{lemma}\label{oposed}
Let $x$ be a cardinal point and $y$ a coast point of $PG(2,q)$ such that $W(x)\u W(y)$ contains all coast points of $PG(2,q)$. Then, for some $\{i,j,k\}=\{1,2,3\}$, $x=c_i$ and $y\in \l{c_j,c_k}$.
\end{lemma}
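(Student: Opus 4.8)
The plan is to exploit the structure of wind roses given by Lemma~\ref{wind roses characterizations} to track exactly which coast points each of $W(x)$ and $W(y)$ contributes, and then to observe that the two must jointly account for all of them. Since $x$ is cardinal, I would write $x=c_i$ and let $\{i,j,k\}=\{1,2,3\}$. By Lemma~\ref{wind roses characterizations}, $W(c_i)=\l{c_i,c_j}\u\l{c_i,c_k}$ is the union of the two coast lines through $c_i$. Consequently the coast points of $\pq$ \emph{not} lying in $W(x)$ are precisely the coast points on the remaining coast line $\l{c_j,c_k}$, of which there are $q-1\ge 1$.

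Next I would determine the coast points of $W(y)$. Writing $y$ as a coast point on a coast line $\l{c_a,c_b}$ (with $\{a,b,c\}=\{1,2,3\}$), a direct check with homogeneous coordinates shows $\l{y,c_a}=\l{y,c_b}=\l{c_a,c_b}$, while $\l{y,c_c}$ is a midland line whose only coast point is $y$ itself. Hence the coast points contributed by $W(y)$ are exactly the coast points on the single coast line $\l{c_a,c_b}$ through $y$ (and $y$ already lies on that line, so the midland part adds nothing new).

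To finish, the coast points on $\l{c_j,c_k}$ that are missed by $W(x)$ must be supplied by $W(y)$, since by hypothesis $W(x)\u W(y)$ contains every coast point. As $W(y)$ contributes only coast points of $y$'s coast line, the coast line through $y$ shares a coast point with $\l{c_j,c_k}$. But two distinct coast lines of $\pq$ meet only in a cardinal point, never in a coast point, so $y$'s coast line must equal $\l{c_j,c_k}$; that is, $y\in\l{c_j,c_k}$, which together with $x=c_i$ is exactly the claimed conclusion.

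The argument is in essence a covering count, so I do not expect a deep obstacle. The one point demanding care is the explicit identification of the coast points of $W(y)$ in the second paragraph — in particular verifying that the midland line $\l{y,c_c}$ contributes no coast point other than $y$ — since this is precisely what prevents $W(y)$ from reaching the coast points of $\l{c_j,c_k}$ unless $y$ lies on that line, and hence what rules out any other configuration.
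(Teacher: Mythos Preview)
Your argument is correct and is precisely the straightforward verification the paper has in mind; the paper in fact omits the proof entirely, stating only that it is ``straightforward.'' Your careful identification of the coast points of $W(y)$ via Lemma~\ref{wind roses characterizations}, together with the observation that two distinct coast lines meet only in a cardinal point, is exactly the natural way to fill in the details.
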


\begin{lemma}\label{no loss of generality}
Let $q$ be a prime power and suppose that $c(q)\le q-2$. There is a minimum covering of $\pq$ by compass roses containing $W(0:0:1)$ and $W(1:1:0)$ and such that all other members are midland.
\end{lemma}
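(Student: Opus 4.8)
The plan is to take the highly structured minimum covering produced by Lemma~\ref{particular cover} and transport it by a carefully chosen projective automorphism, so that its unique cardinal and unique coast wind roses are sent to exactly $W(0:0:1)$ and $W(1:1:0)$ while the midland members stay midland. First I would apply Lemma~\ref{particular cover} to fix a minimum covering $\C$ of $\pq$ by wind roses containing precisely one cardinal wind rose $W(x)$ and one coast wind rose $W(y)$, every other member being midland.

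Next I would pin down $x$ and $y$ in homogeneous coordinates. By Lemma~\ref{wind roses characterizations} a midland wind rose is a union of three midland lines, so it contains no coast line; since Lemma~\ref{covering coast} guarantees that each of the three coast lines lies in a member of $\C$, those three coast lines must all be covered by $W(x)\u W(y)$. As every coast point lies on a coast line, $W(x)\u W(y)$ then contains all coast points of $\pq$, so Lemma~\ref{oposed} applies and yields a permutation $\{i,j,k\}=\{1,2,3\}$ with $x=c_i$ and $y\in\l{c_j,c_k}$. In particular $y=(y_1:y_2:y_3)$ has $y_i=0$ and $y_j,y_k\neq 0$.

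I would then construct a projective automorphism $f$ of $\pq$ carrying cardinal points to cardinal points with $f(c_i)=(0:0:1)$ and $f(y)=(1:1:0)$, obtained from a monomial linear map of $\fqt$ in two stages. The first stage is the permutation map $g$ defined by $g(e_\ell)=e_{\sigma(\ell)}$, where $\sigma$ fixes the requirement $\sigma(i)=3$ and $\{\sigma(j),\sigma(k)\}=\{1,2\}$; this sends $c_i$ to $c_3$ and $\l{c_j,c_k}$ to $\l{c_1,c_2}$, so $g(y)=(a:b:0)$ for some $a,b\neq 0$. The second stage is the diagonal map $h=\mathrm{diag}(a^{-1},b^{-1},1)$, which fixes all three cardinal points and sends $(a:b:0)$ to $(1:1:0)$. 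Setting $f=h\circ g$, any invertible linear map induces an automorphism of $\pq$ and a monomial one permutes $\{c_1,c_2,c_3\}$, so $f$ meets the hypotheses of Lemma~\ref{isomorphism-isometry}, and by construction $f(c_i)=(0:0:1)$, $f(y)=(1:1:0)$.

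Finally I would push $\C$ forward through $f$. Since $f$ is a bijection of $\pq$ with $f(W(p))=W(f(p))$ by Lemma~\ref{isomorphism-isometry}, the family $\C':=\{f(W):W\in\C\}$ is again a covering of $\pq$ of the same (hence minimum) size; it contains $f(W(x))=W(0:0:1)$ and $f(W(y))=W(1:1:0)$, and each remaining member equals $W(f(p))$ with $p$ midland, so it is midland again by Lemma~\ref{isomorphism-isometry}. Thus $\C'$ is the required covering. The only genuinely technical step is the explicit construction and verification of the monomial automorphism $f$; everything else is bookkeeping with the earlier lemmas. I expect that the mild obstacle here is simply organizing the index permutation and the diagonal rescaling so that $c_i$ and $y$ land on their two prescribed images simultaneously, but since the stabilizer of the cardinal points acts through monomial matrices, which are flexible enough to realize these two images at once, no real difficulty arises.
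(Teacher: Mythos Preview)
Your proof is correct and follows essentially the same approach as the paper's: start from the covering of Lemma~\ref{particular cover}, use Lemma~\ref{oposed} to locate the cardinal and coast centers, then transport by a monomial projective automorphism and invoke Lemma~\ref{isomorphism-isometry}. You are in fact more explicit than the paper in verifying the hypothesis of Lemma~\ref{oposed} and in constructing $f$ as a two-stage monomial map; the only cosmetic difference is that the paper first permutes coordinates so that $x\mapsto c_3$ and then applies Lemma~\ref{oposed} to the image, whereas you apply Lemma~\ref{oposed} before constructing $f$.
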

\begin{proof}
By Lemma \ref{particular cover}, there is a minimum covering $\C$ of $\pq$ by compass roses, all of which are midland, except for two, namely $W(x)$ and $W(y)$, where $x$ is a cardinal point and $y$ a coast point. We may define a projective automorphism $f:\pq\rightarrow \pq$ by permutations of homogeneous coordinates and multiplying fixed coordinates by non-zero factors such that $f(x)=(0:0:1)$. By Lemma \ref{oposed}, $f(y)$ is in the form $(a:b:0)$ with $a\neq 0 \neq b$. So, in addition, we may pick $f$ in such a way that $f(y)=(1:1:0)$. By Lemma \ref{isomorphism-isometry}, $\{f(W):W\in \C\}$ is the covering we are looking for.
\end{proof}

Consider the multiplicative group $\fq^*$. We will use the terminologies $\mathbb D(\fq^*), \xi_D(\fq^*)$, etc. as defined in the beginning of Section \ref{sec-proofs} for $G=\fq^*$.

Consider the set $M$ of midland points in $PG(2,q)$ and the bijection $\psi$ between $(\fq^*)^2$ and $M$ defined by $\psi(a,b)=(a:b:1)$. For $x=(a,b)\in (\fq^*)^2$, we clearly have $\psi\big(H_{\fq^*}(x)\big)=M\i\big(\l{\psi(x),c_1}\big)$ and $\psi\big(V_{\fq^*}(x)\big)=M\i\big(\l{\psi(x),c_2}\big)$. Moreover, as $D_{\fq^*}(x)=\{(ta,tb):t\in \fq^*\}$, hence:
$$\psi\big(D_{\fq^*}(x)\big)=\{(ta:tb:1):t\in\fq^*\}=\{(a:b:t^{-1}):t\in\fq^*\}=M\i \big(\l{\psi(x),c_3}\big).$$
As a consequence, $\psi\big(\sq_{\fq^*}(x)\big)=M\i W(\psi(x))$. Note that $\psi\big(D_{\fq^*}(1,1)\big)=M\i \big(\l{c_3,(1:1:0)}\big)$. Therefore, the following lemma holds:

\begin{lemma} \label{wind semiqueens link} Consider the function $\psi$ as defined above and let $X\cont (\fq^*)^2$. Then 
$\sq_{\fq^*}(X)$ is a covering by semiqueens of $\diag(\fq^*)$ if and only if $\{W(\psi(x)):x\in X\}\u\{W(0:0:1),W(1:1:0)\}$ is a covering of $\pq$ by compass roses.
\end{lemma}

Now we are ready to prove Theorem \ref{main}.\\

\begin{proofof}\emph{Proof of Theorem \ref{main}: } It is well known that the multiplicative group of a finite field is cyclic. Thus, $\fq^*\cong \Z_{q-1}$ and, by Lemma \ref{isomorphism}, $\xid(\fq^*)=\xid(q-1)$. 

Let $\mathcal{Q}$ be a minimum covering of $\mathbb D(\fq^*)$ by semiqueens of $(\fq^*)^2$ and $\mathcal{R}$ the covering of $\pq$ by compass roses obtained from $\mathcal{Q}$ as in Lemma \ref{wind semiqueens link}. So, $|\mathcal{R}|-2=|\mathcal{Q}|=\xid(\fq^*)=\xid(q-1)$. By Corollary \ref{equivalent covers 1}, $c(q)\le |\mathcal R|=\xid(q-1)+2$.

Now we have to prove that $\xid(q-1)\le c(q)-2$ to finish the proof. When $q=5$ the values are known and match the theorem (see Section \ref{sec-ilp}). Assume that $q\ge 7$.

We shall prove next that $c(q)\le q-2$ in order to satisfy the hypothesis of Lemma \ref{no loss of generality}. First suppose that $q$ is odd. By Theorem \ref{xidvalues}, $\xid(q-1)\le (q-1)/2$. By the inequality that we already proved, $c(q)\le \xid(q-1)+2\le (q-1)/2 +2$. Since $q\ge 7$, this implies $c(q)\le q-2$. Now suppose that $q$ is even. It is known that $c(8)=6$ (see Section \ref{sec-ilp}). So, we may assume that $q\ge 16$. By Theorem \ref{xidvalues}, $\xid(q-1)\le (2q-1)/3$. Hence $c(q)\le \xid(q-1)+2\le (2q+5)/3$. This implies that $c(q)\le q-2$ because $q\ge 16$. Therefore, $c(q)\le q-2$ for each prime power $q\ge 7$.

By Corollary \ref{equivalent covers 1}, $c(q)$ is the size of a minimum covering $\{W(p):p\in A\}$ of $\pq$ by compass roses. By Lemma \ref{no loss of generality}, we may choose $A$ in such a way that $(0:0:1)$ and $(1:1:0)$ are in $A$ and all points of $B:=A-\{(0:0:1),(1:1:0)\}$ are midland. Consider the injective function $\psi:(\fq^*)^2\rightarrow\pq$ defined by $\psi(a,b)=(a:b:1)$, the same one of Lemma \ref{wind semiqueens link}. By Lemma \ref{wind semiqueens link}, $\mathcal Q:=\{W(x):x\in \psi^{-1}(B)\}$ is a covering of $\mathbb D(\fq^*)$ by semiqueens of $(\fq^*)^2$. So, $\xi_D(q-1)=\xi_D(\fq^*)\le |\mathcal Q|=|B|=|A|-2=c(q)-2$.
\end{proofof}


\section{Particular instances and ILP formulation}\label{sec-ilp}

For $X\in \{\zn^2,\diag_n\}$, the following integer 0-1 linear program may be used to find minimum coverings of $X$ by semiqueens of $\zn^2$. In this formulation, $x_p=1$  if and only if $\sq(p)$ is used in the covering.
\begin{equation*}\begin{array}{rl}
	{\rm Minimize: } 	& \sum\limits_{p\in \zn^2} x_p\\
	{\rm Subject\,\, to:}	& \forall q\in X:\, \sum \limits_{p\in \zn^2:q\in \sq(p)} x_p\ge 1.	
\end{array}\end{equation*}
For finding short coverings of $\fq^3$, a formulation in terms of compass roses in $PG(2,q)$ works similarly (see Corollary \ref{equivalent covers 1}):
\begin{equation*}\begin{array}{rl}
	{\rm Minimize: } 	& \sum\limits_{p\in PG(2,q)} x_p\\
	{\rm Subject\,\, to:}	& \forall q\in PG(2,q):\, \sum \limits_{p\in PG(2,q):q\in W(p)} x_p\ge 1.	
\end{array}\end{equation*}	

Some instances not covered by our theorems were solved using GLPK \cite{GLPK}, Cplex \cite{Cplex} and Gurobi \cite{Gurobi}. They are displayed in the tables below. The values $c(2)$, $c(3)$ and $c(4)$ are already known from \cite{Nakaoka}. \\

\begin{center}
$\begin{array}{|c|c|c|c|c|c|c|c|c|c|}\hline
n	&3 &5 &7 &9 &11	&13 &15 &17 \\\hline
\xi(n)	&2 &3 &5 &6 &7	&9  &9  &11 \\\hline
\xid(n)	&2 &3 &4 &6 &7	&8  &9  &11 \\\hline
\end{array}
$ $\qquad
\begin{array}{|c|c|c|c|c|c|c|c|c|c|}\hline
q		&2 &3 &4 &5 &8 &16 \\\hline
c(q)		&1 &3 &3 &4 &6 &11 \\\hline
\end{array}$
\end{center}


\begin{thebibliography}{30}

\bibitem{BMC} A.P. Burger, C.M. Mynhardt and E.J. Cockayne, Domination and Irredundance in the Queen's Graph, {\it Discrete Math.} {\bf 163} (1997), 47--66.

\bibitem{BMC-94} A.P. Burger, C.M. Mynhardt and E.J. Cockayne, Domination Numbers for the Queen's Graph, {\it Bulletin of the ICA} {\bf 10} (1994), 73--82.

\bibitem{Mynhardt}C.M. Mynhardt, Upper bounds for the domination numbers of toroidal queens graphs, {\it Discuss. Math. Graph Theory} {\bf 23} (2003), 163--175.

\bibitem{Burger2003} A.P. Burger, C.M. Mynhardt and W.D. Weakley, The domination number of the toroidal queens graph of size $3k \times 3k$, {\it Australas. J. Combin.} {\bf 28} (2003), 137--148.

\bibitem{Burger2001} A.P. Burger, E. J. Cockayne and C. M. Mynhardt, Queens graphs for chess-boards on the torus, {\it Australas. J. Combin.} {\bf 24} (2001), 231--246.

\bibitem{Cockayne} E.J. Cockayne, Chessboard domination problems, {\it Discrete Math.} {\bf 86} (1990), 13--20.

\bibitem{Cohen} G. Cohen, I. Honkala, S. Litsyn, and A. Lobstein, \emph{Covering Codes}, North-Holland, Amsterdam, 1997.

\bibitem{Euler} L. Euler, Recherche sur une nouvelle esp\`ece de quarr\`es magiques, {\it Leonardi Euleri Opera Omnia} {\bf 7} (1923) 291--392.

\bibitem{Evans} A.B. Evans, Latin squares without orthogonal mates, {\it Des. Codes Cryptogr.} {\bf 40} (2006), 121--130.

\bibitem{GLPK} GNU Linear Programming Kit, Version 4.52,
http://www.gnu.org/software/glpk/glpk.html 

\bibitem{Gurobi} Gurobi Optimization, Inc., Gurobi Optimizer Reference Manual, 2015, http://www.gurobi.com

\bibitem{Cplex} IBM ILOG CPLEX Optimizer, 2010, {http://www-01.ibm.com/software/integration/optimization/cplex-optimizer/} 

\bibitem{Keri} G. K\'eri, {\it Tables for bound on covering codes}, (2017, accessed) http://archive.is/8LCx

\bibitem{Maillet} E. Maillet, Sur les carr\'es Latins d'Euler, {\it  Assoc. Franc. Caen.} {\bf 23} (1894), 244--252.

\bibitem{Mart-tese} A.N. Martinh\~ao, \emph{Problemas combinat\'orios envolvendo estruturas em espa\c cos de Hamming}, Universidade Estadual de Maring\'a, PhD. Thesis, 2016.

\bibitem{Martinhao} A.N. Martinh\~ao and E.L. Monte Carmelo, Short covering codes arising from matchings in weighted graphs, {\it Math. of Comp.} {\bf 82} (2012), 605--616.

\bibitem{Mendes} C. Mendes, E.L. Monte Carmelo and M.V. Poggi, Bounds for short covering codes and reactive tabu search, {\it Discrete Appl. Math.} {\bf 158} (2010), 522--533.

\bibitem {Neto} E.L. Monte Carmelo and C.F.X. De Mendon\c ca Neto, Extremal problems on sum-free sets and coverings in tridimensional spaces, {\it Aequationes Math.} {\bf 78} (2009), 101--112.

\bibitem{Nakaoka}E.L. Monte Carmelo and I.N. Nakaoka, Short coverings in tridimensional spaces arising from sum-free sets, {\it European J. Combin.} {\bf 29} (2008), 227--233.

\bibitem{Geronimo} E.L. Monte Carmelo, I.N. Nakaoka and J.R. Geronimo, A covering problem on finite spaces and rook domains, {\it Int. J. Appl. Math.} {\bf 20} (2007), 875--886.

\bibitem{Wanless} I.M. Wanless, ``Transversals in Latin squares: A survey'', in .~Chapman (ed.), {\it Surveys in Combinatorics 2011}, London Math. Soc. Lecture Note Series 392, Cambridge University Press, 2011, 403--437.
\end{thebibliography}
\end{document}